\newcommand{\bC}{{\mathbb C}}
\newcommand{\bP}{{\mathbb P}}
\newcommand{\bR}{{\mathbb R}}
\newcommand\dep{\text{\rm dep}}
\newtheorem{thm}{Theorem}[section]
\newtheorem{lem}[thm]{Lemma}
\newtheorem{prop}[thm]{Proposition}
\newtheorem{claim}[thm]{Claim}
\theoremstyle{definition}
\newtheorem{defn}[thm]{Definition}
\newtheorem{rem}[thm]{Remark}
\theoremstyle{remark}
\title{Factoring $3$-fold flips and  divisorial contractions to curves}
\author{Jungkai A. Chen and Christopher D. Hacon }
\address{Department of Mathematics, National Taiwan University, Taipei,
106, Taiwan} \email{jkchen@math.ntu.edu.tw}
\address{Department of Mathematics, University of Utah, Salt Lake
  City, 155 South 1400 East, JWB 233, UT 84112-0090, USA}
\email{hacon@math.utah.edu}
\thanks{The first author was partially supported by TIMS, NCTS/TPE
and  National Science Council of Taiwan. The second author was
supported by NSF grant
  0757897.
  We are indebted to Hayakawa, Koll\'ar and  Mori for many useful discussion especially to Mori who  read our
  preliminary version and gave various comments and corrections.
Some of this work was done during a visit of the first author to
the University of Utah. The first author would like to thank the
University of Utah for its hospitality.}
\begin{document}
\begin{abstract}
We show that $3$-fold terminal flips and divisorial contractions to a curve
may be factored by a sequence of weighted blow-ups, flops, blow-downs to a locally complete intersection curve in a smooth $3$-fold or divisorial contractions to a point.
\end{abstract}
\maketitle
\pagestyle{myheadings} \markboth{\hfill J. A. Chen and C. D. Hacon \hfill}{\hfill Factoring $3$-fold flips and divisorial contractions to curves \hfill}
\section{introduction}
Flips, flops and divisorial contractions are the ``elementary'' birational
maps of the minimal model program.
Divisorial contractions are the higher dimensional
analog of (the inverse of) blowing up a smooth point on a surface.
They are morphisms contracting a divisor covered by $K_X$-negative curves.
Flips
are instead a new operation which consists of a ``surgery'' in
codimension $\geq 2$ which replaces certain $K_X$-negative curves by some
$K_X$-positive curves.
Flops are also surgeries in codimension $\geq 2$
but they replace certain $K_X$-trivial curves by some new
$K_X$-trivial curves.

These operations appear as follows.
Given a smooth projective variety of general type over $\mathbb C$,
then a minimal model
of $X$ may be constructed via a finite sequence of flips and divisorial contractions (cf. \cite{BCHM}). Minimal models are not unique, but is known that any two
minimal models are connected by a finite sequence of flops (cf. \cite{Kaw08}).

Not surprisingly, the geometry of divisorial contractions is better
understood than that of flips and flops. In fact the proof of the existence
of flips and flops (cf. \cite{BCHM}, \cite{HM})
is very abstract and gives little insight to their geometry.

In dimension $3$ these operations are reasonably well understood.
Koll\'ar has shown \cite{Kol89} that
if $X\dasharrow X^+$ is a flop (of terminal $3$-folds), then
$X$ and $X^+$ have analytically isomorphic singularities.
In his seminal paper \cite{Mo88}, Mori gives a precise
understanding of the geometry of all flips with irreducible flipping curve (extremal neighborhoods).

It should also be noted that $3$-fold divisorial
contractions may be subdivided into two classes:
those that map the divisor to a curve and those that map the
divisor to a point. Divisorial contractions to a point are classified in \cite{Co00},
\cite{Kaw96}, \cite{Kk01}, \cite{Kk02}, \cite{Kk03}, \cite{Kk05} and \cite{Luo98}.
There are also several partial results on divisorial contractions to a curve
cf. \cite{Cu}, \cite{Tz1},
\cite{Tz2}, \cite{Tz3}, \cite{Tz4} and \cite{Tz5}.

The purpose of this paper is to show that in dimension $3$,
we can factor flips and divisorial contractions to curves via the simpler operations given by flops, blow-downs to LCI curves (i.e. $C\subset Y$
a local complete intersection curve in a smooth variety) and divisorial contractions to points. More precisely we prove

\begin{thm} \label{divSig}
Let  $g: X \to W$ be a flipping contraction and
$\phi: X {\dashrightarrow} X^+$ be the corresponding flip, then
$\phi$ can be factored as
$$ X=X_0 \stackrel{f_0}{\dashrightarrow} X_1 \dashrightarrow\ldots
\dashrightarrow X_n \stackrel{f_n}{\dashrightarrow} X^+,$$
such that each $f_i$ is the inverse of a $w$-morphism, a flop, a blow-down to a LCI curve or  a divisorial contraction to a point.

Let $g: X \to W$ be a divisorial contraction to a curve, then $g$ can be factored as
$$ X=X_0 \stackrel{f_0}{\dashrightarrow} X_1 {\dashrightarrow}\ldots  {\dashrightarrow}X_n\stackrel{f_n}{\dashrightarrow} W,$$
such that each $f_i$ is the inverse of a $w$-morphism, a flop, a blow-down to a LCI curve or a divisorial contraction to a point.
\end{thm}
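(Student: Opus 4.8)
The plan is to prove both statements simultaneously by induction on an invariant of the singularities of $X$ — its depth $\dep(X)$ — which measures the minimal number of $w$-morphisms needed to reach a Gorenstein terminal model, and more generally records how far $X$ is from carrying a locally complete intersection structure over $W$. Since $w$-morphisms are the elementary building blocks that extract a divisor of minimal discrepancy, any reduction of $\dep$ brings us closer to a configuration in which the residual contraction is one of the permitted simple operations (a flop, a blow-down to an LCI curve, or a divisorial contraction to a point). The base case will be precisely the situation where $X$ is already ``Gorenstein enough'' relative to $W$, and there one identifies the remaining map directly.

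For the flip, I would begin with the flipping contraction $g\colon X\to W$ and extract a $w$-morphism $h\colon Y\to X$ centered on the non-Gorenstein part of the flipping locus, so that $K_Y=h^{*}K_X+\tfrac1n E$ for the exceptional divisor $E$ and, crucially, $\dep(Y)<\dep(X)$. The inverse $h^{-1}$ is itself one of the allowed operations. I then run the relative minimal model program for $K_Y$ over $W$: each intermediate step is a flip, a flop, or a divisorial contraction taking place on varieties of depth strictly smaller than $\dep(X)$, so by the inductive hypothesis every such step is already factored into the permitted elementary operations. The program terminates at a relative minimal model $Y^{+}\to W$, and the final task is to compare $Y^{+}$ with $X^{+}$: since $K_{X^{+}}$ is ample over $W$, one shows that the induced birational map $Y^{+}\dashrightarrow X^{+}$ is again the inverse of a $w$-morphism, or a short sequence of flops, which closes the factorization of $\phi$.

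The divisorial contraction to a curve is handled in the same framework, with $W$ now playing the role of the final model. Here I would use $w$-morphisms and the relative MMP to transform $X$ through a chain $X=X_0\dashrightarrow\cdots\dashrightarrow X_n$ until the singularities along the image curve $C\subset W$ have been simplified enough that the residual morphism $X_n\to W$ is literally the blow-down of an LCI curve in a smooth $3$-fold, or factors through a divisorial contraction to a point. Again the reductions are governed by strict decrease of $\dep$, so the inductive hypothesis disposes of all the intermediate flips and divisorial contractions produced along the way.

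The main obstacle, I expect, is twofold. First, one must define $\dep$ so that a single well-chosen $w$-morphism provably decreases it, and so that it behaves well under the steps of the relative MMP over $W$; this forces a careful analysis of how discrepancies and the count of non-Gorenstein points transform under weighted blow-ups. Second, and harder, is the base case: when $\dep$ has reached its minimum one must show by hand that the remaining contraction is exactly a flop, an LCI blow-down, or a divisorial contraction to a point. This is where Mori's classification of extremal neighborhoods \cite{Mo88} and the explicit structure of terminal $3$-fold singularities should enter, and it is the step most likely to demand an extended case analysis.
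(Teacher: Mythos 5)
Your overall strategy --- induction on the depth $\dep(X)$ (the minimal length of a $w$-resolution), extraction of a $w$-morphism to drop the depth by one, a relative MMP over $W$, and a Gorenstein base case handled by Cutkosky's classification --- is exactly the architecture of the paper's proof. But the two points you flag as ``obstacles'' are not minor technicalities to be deferred: they are the actual content, and as written one of your steps would fail. After the $w$-morphism $f\colon Y\to X$ one has $\rho(Y/W)=2$ with ${\rm NE}(Y/W)=\bR_+l+\bR_+C_Y$, where $l$ is a fiber of the exceptional divisor $F$ and $C_Y$ is the strict transform of the flipping (or contracted) curve. The ray $\bR_+l$ is always $K_Y$-negative, and contracting it simply recovers $f$; so if $K_Y\cdot C_Y>0$, ``running the relative MMP for $K_Y$ over $W$'' just undoes the extraction and the construction is circular. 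The paper's key theorem (Theorem \ref{key}) is precisely the statement that for every non-Gorenstein extremal neighborhood one can \emph{choose} the $w$-morphism so that $K_Y\cdot C_Y\le 0$, proved by an explicit intersection computation $C_Y\cdot F$ in each of Mori's cases (IC, IIB, exceptional IA+IA, IA+IA+III, semistable IA+IA), using Hayakawa's description of the weighted blow-ups; and even then $C_Y$ may be $K_Y$-trivial, so one must perturb by an auxiliary divisor $D$ with $D\cdot C<0$ and perform a $(K_Y+\epsilon D_Y)$-flip, which is a $K_Y$-flip \emph{or flop}, before the genuine $K_{Y^+}$-MMP can proceed to the terminal divisorial contraction $Y'\to X'$.

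The second gap is the monotonicity of $\dep$ along the chain. Flops preserve depth by Koll\'ar's theorem that flopped threefolds have isomorphic singularities, and flips strictly decrease it --- but the latter (Proposition \ref{p-fl}) is not proved in advance: it is established \emph{simultaneously} with the main theorem by the same induction, together with the statement that an extremal contraction to a curve does not increase depth. Moreover, when $Y'\to X'$ is a divisorial contraction to a point one needs $\dep(Y')\ge\dep(X')-1$ (Proposition \ref{p-dep}), which is a long case-by-case verification against Kawakita's classification using the singular Riemann--Roch formula; without it the inductive inequality $\dep(X)>\dep(X^+)$ does not close. So your proposal is the correct skeleton of the paper's argument, but the proof lives entirely in the two places you left open.
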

We remark that the proof of the above theorem follows by the classification results of \cite{Mo88} and the results of \cite{HaI} and \cite{HaII}.
One should also note that this kind of approach is not entirely new. See for example \cite{Kaw94} and \cite{Ha03}.

As a final motivation for \eqref{divSig}, we would like to mention a possible
application of these ideas to characteristic $p>0$.
In this context, the existence of divisorial contractions (in the big case)
is known by work of Keel \cite{Keel99}. The existence of flips is instead
only known in some special cases \cite{Kaw94} and \cite{Kol91}.
Theorem \ref{divSig} suggests that it may be possible to construct
flips in  characteristic $p>0$ via a sequence of divisorial
contractions (and extractions) and flops (which are hopefully easier
to construct than flips).

\section{Preliminaries}
We work over the field of complex numbers $\mathbb C$.
{\em Numerical} (resp. {\em linear}, $\mathbb Q$-{\em linear}) {\em equivalence} of divisors is denoted by $\equiv$ (resp. $\sim$, $\sim_{\mathbb Q }$),
if $f:X\to Y$ is a projective morphism, then $\equiv _Y$ denotes $f$-{\em numerical}
(also known as numerical over $Y$) {\em equivalence} and $\sim_{\mathbb Y }$ (resp. $\sim_{\mathbb Q , Y}$) denote {\em linear equivalence} (resp. $\mathbb Q$-{\em linear equivalence}) {\em over $Y$}. So, for example, $D\sim_{\mathbb Q , Y}D'$ if and only if there
are rational numbers $q_i\in \mathbb Q$, rational functions $f_i\in \mathbb C (X)$ and a $\mathbb Q$-Cartier divisor $L$ on $Y$ such that
$D-D'=\sum q_i(f_i)+f^*L$.
If $X$ is a normal variety, then $K_X$ denotes a canonical divisor on $X$.
A {\em log pair} $(X,B)$ is given by a normal variety $X$ and a
$\mathbb Q$-divisor $B$ such that $K_X+B$ is $\mathbb Q$-Cartier.
A {\em log resolution} of a log pair $(X,B)$ is a proper birational morphism
$\nu :X'\to X$
such that ${\rm Exc }(\nu )\cup \nu ^{-1}_*B$ is a simple normal crossings
divisor.
If $(X,B)$ is a log pair and $\nu :X'\to X$ is a  log resolution of $(X,B)$,
then we write $K_{X'}+\nu ^{-1}_*B=\nu ^* (K_X+B)+\sum a_iE_i$ where
$\nu _* K_{X'}=K_X$. The rational numbers $a_{E_i}(X,B):=a_i$ are the
{\em discrepancies} of $(X,B)$.
The log pair $(X,B)$ is {\em  terminal} (resp. {\em  canonical} or {\em plt}) if
$a_{E}(X,B)>0$ (resp. $a_{E}(X,B)\geq 0$ or $a_{E}(X,B)>-1$) for all exceptional divisors $E$ over $X$. The log pair $(X,B=\sum b_iB_i)$ is {\em klt}
if it is plt and $b_i<1$ for all $i$.
By inversion of adjunction, it is known that if $(X,S+B)$ is a log pair with
$\lfloor S+B\rfloor =S$, then $(X,S+B)$ is plt (resp. canonical or terminal)
if and only if
$(S,B_S)$ is klt (resp. canonical or terminal) where $(K_X+S+B)|_S=K_S+B_S$.

\subsection{Terminal threefold singularities}
Here we briefly recall several facts about terminal singularities in dimension $3$.
It is well known that terminal surfaces are smooth and hence terminal
threefolds have isolated singularities.
Let $P \in X$ be the germ of a terminal singularity, then $X$ has rational
singularities. The {\em index} of $P \in X$ is the smallest integer $r>0$
such that $rK_X$ is Cartier on a neighborhood of $P$.
If $P\in X$ is a point of index $r$ and $D$ is any Weil divisor on $X$, then $rD$ is also Cartier on a neighborhood of $P$.
The canonical cover $\pi: (P^\sharp \in X^\sharp ) \to (P \in X)$ is a cyclic cover of degree $r$ such
that $P^\sharp$ is an isolated $cDV$ point of index $1$ (cf.
\cite[3.1]{Reid83}). The analytic germ $(P^\sharp \in X^\sharp)$ can be embedded
into $\bC^4$ as a hypersurface $\varphi=0$. The singularity $(P \in
X)$ can thus be realized as a cyclic group $\mu_r$-quotient of a
hypersurface singularity.  Note that there exists a semi-invariant
coordinate system $y_1,\ldots, y_4$ near $(P^\sharp \in X^\sharp)$.
These singularities are classified in \cite{Mo85}
(see also \cite[\S 1a]{Mo88} for a brief survey).

The weights of the coordinates
$y_1,\ldots, y_4$ are usually denoted by
$\frac{1}{r}(a_1,a_2,a_3,a_4)$ \cite[pg. 243]{Mo88}.
Two weights $\frac{1}{r}(a_1,a_2,a_3,a_4)$ and
$\frac{1}{r}(b_1,b_2,b_3,b_4)$ are said to be equivalent if
there is an integer $\lambda$ relatively prime to $r$, such that $b_i
\equiv a_i \lambda \ \ (\text{mod} \ r)$ for all $i$.
Recall that for any terminal singularity $P\in X$ there is a deformation producing $k\geq 1$ terminal cyclic quotient singularities $P_1,\ldots , P_k$.
The number $k=aw(P\in X)$ is called the {\em axial weight} of $(P\in X)$.
We let $$aw(X)=\sum _{P\in {\rm Sing}(X)}aw(P\in X).$$
Note that the axial weight coincides with the {\em axial multiplicity} defined in \cite[Definition-Corollary 1a.5.iii]{Mo88} except in the case $cAx/4$.
We may assume that the singularities $P_i$ have type $\frac 1 {r_i}(1,-1,b_i)$ where
$0<b_i\leq r/2$.
The collection $\{ P_1,\ldots , P_k \}$ is known as the
 {\em basket of singularities } of $(X,P)$ and it can be written as
$$\mathcal B(P\in X)=\{n_i\times (b_i,r_i)|i\in I,\ n_i\in \mathbb Z ^+ \},$$
where $n_i$ denotes the number of times that a point $P_i$
representing a singularity $\frac 1 {r_i}(1,-1,b_i)$ appears.

To a given variety $X$, we can associate the basket of singularities
$\mathcal B=\cup _{P\in {\rm Sing}(X)}\mathcal B(P\in X)$.
We can define the following invariants corresponding to a singularity
$$\sigma (P\in X)=\sum _{i=1}^{aw(P\in X)}b(P_i),\qquad \sigma (X)
=\sum _{P\in {\rm Sing}(X)}\sigma (P\in X),$$
$$\Xi (P\in X)=\sum _{i=1}^{aw(P\in X)}r(P_i),\qquad \Xi (X)
=\sum _{P\in {\rm Sing}(X)}\Xi (P\in X).$$

\begin{rem}\label{r-basket} The basket $\mathcal B(P\in X)$ is uniquely determined by the singularity
$(P\in X)$. We have
$$\begin{array}{c|ccccc} \label{table1}
\text{type} & \text{G. E.} & aw &  \text{basket}
 & \sigma & \Xi \\
\hline
cA/r &  A_{kr-1} & k  & k \times (b,r) & kb & kr\\
cAx/2 & D_{k+2} & 2  & 2 \times (1,2) & 2 & 4\\
cAx/4 &  D_{2k+1} & k & \{ (1,4), (k-1) \times
(1,2)\} & k & 2k+2\\
cD/2 & D_{2k} & k & k \times (1,2) & k & 2k \\
cD/3 & E_6 & 2 & 2 \times (1,3) & 2 & 6 \\
cE/2 & E_7 & 3  & 3 \times (1,2) & 2 & 6
\end{array}$$
where the column G. E. lists the singularities for general elephant (i.e. a general element) $E_X\in |-K_X|$.
\end{rem}

\subsection{Extremal neighborhoods}\label{ss-en}
Recall that (cf. \cite[\S 1]{KM92}) an {\em extremal neighborhood}
$f:X\supset C\to Y\ni Q$ is a proper bimeromorphic morphism of
complex $3$ dimensional spaces $f:X\to Y$ such that
\begin{enumerate} \item $X$ is terminal;
\item $Y$ is normal with distinguished point $Q\in Y$;
\item $f^{-1}(Q)$ consists of a single irreducible curve $C\subset X$;
\item $K_X\cdot C<0$.\end{enumerate}
If instead of the condition $K_X\cdot C<0$, we have the condition $K_X\cdot C=0$, then we will say that $f$ is a {\em $K$-trivial extremal neighborhood}.
An {\em isolated extremal neighborhood} is an extremal neighborhood $f:X\supset C\to Y\ni Q$ such that $\dim {\rm Exc}(f)=1$. In this case
${\rm Exc}(f)=C$ and $K_Y$ is not $\mathbb Q$-Cartier.

An extremal neighborhood is {\em divisorial} if  $\dim {\rm Exc}(f)=2$.
In this case, ${\rm Exc}(f)$ is an irreducible divisor, $K_Y$ is
$\mathbb Q$-Cartier and $Y$ is terminal.

By \cite[pg. 151, 2.3.2]{Mo88}, \begin{equation}\label{e-2}K_X
\cdot C= -1+ \sum_{P \in C} w_P(0),\end{equation}  where
$$w_P(0)=\left\{ \begin{array}{ll} 0 & \text{ if } P \text{ is of
index } 1, \\ 0\leq \frac {c}{r}
\le \frac{r-1}{r} & \text{ if } P \text{ is of index }
r>1.
\\ \end{array} \right. $$
The exact value of $w_P(0)$ is given in \cite[pg. 175, 4.9]{Mo88}.

Extremal neighborhoods are classified in \cite{Mo88}, see also \cite{KM92} and \cite{Mo07}.
We have the following fundamental result also known as Reid's general elephant conjecture (cf. \cite{Mo88}).
\begin{thm}\label{t-ge} Let $f:X\supset C\to Y\ni Q$  be an extremal neighborhood and
$E_X\in |-K_X|$ and $E_Y\in |-K_Y|$ be general elements. Then $E_X$ and $E_Y$ are normal and have only Du Val singularities.
\end{thm}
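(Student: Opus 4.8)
The plan is to prove the statement for $E_X$ first and then deduce the statement for $E_Y$ by descent along $f$. The two systems are linked: $K_X\sim -E_X$ pushes forward to $K_Y=f_*K_X\sim -f_*E_X$, so $E_Y:=f_*E_X$ is a general member of $|-K_Y|$ and, conversely, the general member of $|-K_Y|$ arises this way. Granting that $E_X$ is normal with Du Val singularities, adjunction gives $K_{E_X}\sim(K_X+E_X)|_{E_X}\sim 0$, and likewise $K_{E_Y}\sim 0$; since $f$ contracts only $C$, the induced birational morphism $f|_{E_X}\colon E_X\to E_Y$ satisfies $K_{E_X}\sim(f|_{E_X})^*K_{E_Y}$, i.e. it is crepant. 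Because $\OO_{E_Y}=(f|_{E_X})_*\OO_{E_X}$ forces $E_Y$ to be normal, and a crepant birational image of a Du Val surface is again Du Val (it contracts only $(-2)$-curves, here the image of $C$ when $C\subset E_X$), the theorem reduces to the general elephant on $X$.

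To show that a general $E_X\in|-K_X|$ is normal with Du Val singularities I would partition the possible bad points. Away from $C$ the map $f$ is an isomorphism and $X$ is terminal, so nothing is to be checked. At the terminal points $P\in X$ lying on $C$, the local general elephant theorem for terminal threefold singularities (Reid, building on the classification in \cite{Mo85} and \cite{Reid83}) guarantees that a general local section of $|-K_X|$ has exactly the Du Val singularity recorded in the G.\,E.\ column of Remark~\ref{r-basket}; this is precisely the local input encoded in that table. At smooth points of $X$ on $C$, a general $E_X$ is smooth wherever $|-K_X|$ is free. Hence the entire problem concentrates on controlling $\mathrm{Bs}\,|-K_X|$ along $C$.

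For the structural properties, $E_X$ inherits Serre's condition $S_2$ from $X$ (which is Cohen--Macaulay) wherever $-K_X$ is Cartier, and $K_{E_X}\sim 0$ makes $E_X$ Gorenstein; a normal Gorenstein surface singularity is Du Val exactly when it is rational. Thus, once normality is known, the remaining danger is a non-rational (e.g.\ simple elliptic) point, and both normality (the $R_1$ condition, i.e.\ finiteness of $\operatorname{Sing}(E_X)$) and rationality of the isolated singularities reduce to understanding the base scheme of $|-K_X|$ along $C$. The analytic tool is a Riemann--Roch and vanishing computation: relative Kawamata--Viehweg vanishing gives $R^1f_*\OO_X(-K_X)=0$, since $-K_X=K_X+(-2K_X)$ with $-2K_X$ being $f$-ample, and this controls $\dim|-K_X|$ together with the restriction maps $H^0(-K_X)\to H^0(\OO_{nC}(-K_X))$ on the thickenings of $C$, bounding the base locus there.

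The main obstacle is exactly this last point: excluding a general $E_X$ that is non-normal or singular along a curve requires knowing the precise analytic structure of $(X,C)$, and there is no avoiding the full classification of extremal neighborhoods in \cite{Mo88} (see also \cite{KM92}). Concretely, one must run the analysis case by case -- separating the flipping (isolated) neighborhoods from the divisorial ones, and within each organizing by the singularities of $X$ met by $C$ and the values of $w_P(0)$ in \eqref{e-2} -- and exhibit the general elephant explicitly in each case, verifying normality and that the only singularities are the locally predicted Du Val points. It is this case analysis, rather than any single conceptual step, that carries the real depth of the theorem; the reductions above merely package the local general elephant property and the crepant descent along $f$ so that the classification has to be applied only to the single system $|-K_X|$ on $X$.
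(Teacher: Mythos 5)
First, a point of calibration: the paper does not prove this statement. Theorem \ref{t-ge} is Reid's general elephant conjecture for extremal neighborhoods, quoted as a known result with the citation to \cite{Mo88} (flipping case) and, implicitly, \cite{KM92} and \cite{Mo07} (divisorial case and corrections); it is one of the deepest theorems in threefold birational geometry and is used here as a black box. So there is no in-paper argument to match your proposal against, and any genuine proof would have to reproduce the substance of those papers.

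Measured against that, your proposal is an accurate table of contents for Mori's proof rather than a proof. The descent step $E_X\Rightarrow E_Y$ is essentially sound: $f_*\mathcal O_X(-K_X)=\mathcal O_Y(-K_Y)$ identifies the two linear systems, $R^1f_*\omega_X=0$ (Grauert--Riemenschneider) applied to $0\to\omega_X\to\mathcal O_X\to\mathcal O_{E_X}\to 0$ gives $\mathcal O_{E_Y}=(f|_{E_X})_*\mathcal O_{E_X}$ and hence normality of $E_Y$ (your phrasing makes this sound automatic; it is not, but it is recoverable), and crepancy of $E_X\to E_Y$ then yields that $E_Y$ is Du Val. The genuine gap is everything before that: the assertion that a general $E_X\in|-K_X|$ is normal with Du Val singularities is not a consequence of the local general elephant at each $P\in C$ (the G.\,E.\ column of Remark \ref{r-basket}) together with control of ${\rm Bs}\,|-K_X|$ in the sense you describe, because the general \emph{global} section near $P$ need not be a general \emph{local} section of the singularity germ --- in the hard cases (IC, IIB, IA{+}IA, \dots) the general member is forced to contain $C$ and its singularity at $P$ must be determined anew from the pair $(E_X\ni P,\ C)$. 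Moreover, one cannot cleanly "invoke the classification and then verify the elephant case by case": in \cite{Mo88} and \cite{KM92} the classification of extremal neighborhoods and the general elephant statement are established together, through the analysis of the graded sheaves ${\rm gr}^n_C\mathcal O_X$ and ${\rm gr}^n_C\omega_X$, and that analysis is the theorem. Your reductions are correct packaging, but the box they package is empty.
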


\subsection{Extremal contractions}
A {\em flipping contraction} (resp. {\em flopping contraction}) is a projective birational morphism $f:X\to Y$
from a terminal $3$-fold to a normal variety such that $\dim {\rm Exc}(f)=1$,
$\rho (X/Y)=1$,
$f_*\mathcal O _X =\mathcal O _Y$ and $-K_X$ is $f$-ample (resp. $K_X$ is $f$-trivial).
A {\em flipping curve} (resp. {\em flopping curve}) is a curve $C\subset {\rm Exc}(f)$.
The {\em flip } (resp. {\em flop}) of a  flipping contraction (resp. a {flopping contraction}) $f:X\to Y$
is a birational morphism $f^+:X^+\to Y$ where $X^+$ is a terminal $3$-fold,
 $\dim {\rm Exc}(f^+)=1$,
$\rho (X^+/Y)=1$,
$f^+_*\mathcal O _{X^+} =\mathcal O _Y$ and $K_{X^+}$ is $f$-ample (resp. $f$-trivial).
A {\em flipped curve} (resp. a {\em flopped curve}) is a curve $C\subset {\rm Exc}(f^+)$.
One defines the flip (resp. flop) of an extremal neighborhood (resp. $K$-trivial extremal neighborhood) analogously.

A {\em divisorial contraction} is a birational morphism $f:X\to Y$
from a terminal $3$-fold to a normal variety such that $\dim {\rm Exc}(f)=2$,
$\rho (X/Y)=1$,
$f_*\mathcal O _X =\mathcal O_Y$ and $-K_X$ is $f$-ample.
It is known that the contracted divisor $F\subset X$ is irreducible and $Y$ is terminal.

We will need the following results.
\begin{thm}\label{t-flip} Let $f:X\supset C\to Y\ni P$ be a flipping (resp. flopping contraction). Then the flip (resp. flop) $f^+:X^+\to Y$ exists.
Moreover, in the analytic category, $\varphi :X\dasharrow X^+$ factors via a finite
sequence of flips (resp. flops) $\varphi _i :X_i\dasharrow X_i^+=X_{i+1}$ with
irreducible flipping (resp. flopping) curves
$f_i:X_i\to Y_i$ over $Y$ such that
${\rm Exc }(\varphi _i)= C_i\cong \mathbb P ^1$ and ${\rm Exc }(\varphi _i^+)= C_i^+\cong \mathbb P ^1$.
\end{thm}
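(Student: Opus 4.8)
The plan is to treat the existence statement and the factorization statement separately: the first by appeal to the established structure theory of $3$-fold flips and flops, and the second by reduction to the analytic germ together with Mori's explicit classification.

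For existence I would quote the known results. In the flopping case the flop $f^+\colon X^+\to Y$ exists and $X^+$ is again terminal by the classical theory of terminal $3$-fold flops (Reid's pagodas and Kollár \cite{Kol89}). In the flipping case I would invoke Mori's existence theorem for terminal $3$-fold flips \cite{Mo88} (a uniform alternative is \cite{BCHM}). The remaining properties of $f^+$ recorded in the definition of the flip/flop are part of that theory, so the first sentence follows at once.

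For the factorization I would pass to the analytic germ $(P\in Y)$ and induct on the number $m$ of irreducible components of the central fiber $C=f^{-1}(P)$, which is connected because $f_*\cO_X=\cO_Y$. If $m=1$ then $f$ is an extremal neighborhood (resp. a $K$-trivial extremal neighborhood), and Mori's classification \cite{Mo88}, together with the general elephant theorem \ref{t-ge}, shows that $C\cong\mathbb P^1$; thus $\varphi$ is already a single elementary flip (resp. flop) and the statement holds with $n=0$. The real work is the case $m\geq 2$, where one wants to exhibit $\varphi$ as a composition of flips (resp. flops) of single curves $C_i\cong\mathbb P^1$, each contracted by its own $f_i\colon X_i\to Y_i$.

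Here I expect the crux, and also a point that dictates the shape of the argument. One cannot peel off a single component by an intermediate contraction lying over $Y$: since $\rho(X/Y)=1$ the classes $[C_1],\dots,[C_m]$ are all proportional in $N_1(X/Y)$, and the natural injection $N_1(X/Y_i)\hookrightarrow N_1(X/Y)$ would force $[C_1]=0$ in $N_1(X/Y)$, contradicting that $[C_1]$ generates the extremal ray; so no morphism $X\to Y_i\to Y$ can contract $C_1$ while preserving the other components. Consequently the intermediate models $X_i$ are isomorphic to $X$ in codimension one, but for $i\geq 1$ the map $X_i\dashrightarrow Y$ is only birational (not a morphism) and the $Y_i$ are auxiliary germs birational to $Y$. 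To produce them I would follow Mori's explicit construction of the flip in \cite{Mo88} (see also \cite{KM92}, \cite{Mo07}): the flip is built by a sequence of weighted blow-ups and contractions read off from the analytic normal forms of $X$ along $C$ (as $\mu_r$-quotients of hypersurfaces in $\mathbb C^4$), and this procedure naturally passes through terminal models in which a single irreducible rational curve is flipped at each stage. The main obstacle is to organize this explicit analysis so that every elementary step is of the asserted form ($C_i\cong\mathbb P^1$) and so that a suitable inductive quantity — for instance the number of components, or one of the invariants $\sigma$, $\Xi$ attached to the basket of singularities — strictly drops, guaranteeing termination; verifying this, component by component and case by case in Mori's list, is where I expect essentially all of the difficulty to lie.
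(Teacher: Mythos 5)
The paper does not reprove this statement: it is quoted directly from Mori \cite{Mo88} and Kawamata \cite{Ka} (existence of the flip/flop is Mori's theorem, and the factorization through extremal neighborhoods is the reduction step of \cite{Mo88} and \cite{Ka}). Your treatment of the existence part is consistent with that and is fine.

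Your factorization argument, however, rests on a false premise. You assert that, because $\rho(X/Y)=1$, the classes $[C_1],\dots,[C_m]$ are proportional in $N_1(X/Y)$ and therefore no contraction $X\to Y_1$ over $Y$ can contract a single component; you then conclude that the intermediate maps $X_i\dasharrow Y$ are not morphisms and that the $Y_i$ are only ``auxiliary germs birational to $Y$''. This contradicts the very statement you are proving, which asserts morphisms $f_i:X_i\to Y_i$ lying over $Y$. The point you are missing is that the factorization takes place in the analytic category over the germ $(P\in Y)$: there the relevant $N_1(X/Y)$ is computed using divisors defined only on a neighborhood of the fiber, and for a small (or crepant) contraction the classes of the components of $C$ are no longer forced to be proportional --- one can produce an analytic divisor that is negative on exactly one component $C_j$ and run a relative MMP in the analytic category. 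This is precisely how Kawamata \cite{Ka} and Mori \cite{Mo88} contract one irreducible component at a time and reduce to extremal neighborhoods; the hypothesis $\rho(X/Y)=1$ constrains global algebraic divisor classes and says nothing about local analytic ones. Your proposed fallback --- to follow Mori's explicit construction of the flip via weighted blow-ups and check case by case --- does not repair this gap and is in fact circular: Mori's construction in \cite{Mo88} is carried out for extremal neighborhoods, i.e.\ it already assumes the flipping curve is irreducible, so it cannot be the source of the reduction to irreducible curves. That reduction is the actual content of the second half of the theorem, and it is exactly the analytic-category contraction of individual components that your argument rules out.
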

\begin{proof} See \cite{Mo88} and \cite{Ka}.
\end{proof}
\begin{rem}\label{r-div} Notice that the arguments of \cite{Ka} also imply that
if $f:X\supset C\to Y\ni P$ is a contraction of a divisor to a curve and
$C=\cup C_j$ is not irreducible, then (in the analytic category)
$f$ factors through an extremal neighborhood.
\end{rem}
\begin{rem}\label{t-el} It is expected that if $X$ is a terminal $3$-fold and $f:X\to Y$ is the germ of a flipping or divisorial contraction to a curve and
if $E_X\in |-K_X|$ is general, then the divisor $E_X$ is normal with Du Val singularities.
\end{rem}

\begin{thm}\label{t-Cu} Let $X$ be a terminal Gorenstein
$3$-fold, $f:X\to Y$ an
extremal contraction. Then $f$ is divisorial and if it contracts the divisor
to a curve $C$, then $X$ has only $A_n$ singularities,
$C\subset Y$ is LCI (i.e. $Y$ is smooth near $C$ and $C$ is a local complete
intersection in $Y$) and $X$ is the blow up of $Y$ along $C$.
If $f$ contracts the divisor
to a point $P\in Y$, then $f:X\to Y$ is classified (it belongs to one of the
four possibilities listed in  \cite{Cu}).
\end{thm}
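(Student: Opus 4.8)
The plan is to reduce the whole statement to the geometry of a general elephant, which for a Gorenstein terminal threefold is a Du Val surface, together with classical surface contraction theory. Since $X$ is Gorenstein terminal, $K_X$ is Cartier and $X$ has isolated compound Du Val singularities; fix a general elephant $S\in|-K_X|$, which by Theorem \ref{t-ge} is normal with Du Val singularities and, by adjunction, satisfies $K_S=(K_X+S)|_S\sim 0$.

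First I would show that $f$ (which we take to be birational) is divisorial. Suppose not, so $\dim\operatorname{Exc}(f)=1$ and $f$ is a flipping contraction with flipping curve $C$; since $K_X$ is Cartier and $-K_X\cdot C>0$ we have $K_X\cdot C\le -1$. Restricting $f$ to $S$ gives a birational morphism $f_S\colon S\to S_Y:=f(S)$; since $K_S\sim 0$, every curve contracted by $f_S$ is a $(-2)$-curve and $f_S$ is crepant, so $S_Y$ is again Du Val and $S_Y\in|-K_Y|$. One deduces that $Y$ is Gorenstein near $Q=f(C)$, hence $K_Y$ is Cartier; as a small contraction has no exceptional divisor this forces $K_X=f^*K_Y$ and thus $K_X\cdot C=0$, contradicting $K_X\cdot C\le -1$. (The argument does not exclude $K_X\cdot C=0$, i.e.\ it correctly permits flops.) Therefore $\dim\operatorname{Exc}(f)=2$ and $f$ is divisorial.

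Now suppose $f$ contracts the divisor $F$ to a curve $C$. The curves contracted by $f$ are the fibres $\ell$ of $F\to C$, and for general $S$ each meets $S$ in the finite set of $S\cdot\ell=-K_X\cdot\ell>0$ points; hence $f_S\colon S\to S_Y$ contracts only the finitely many fibres lying in $S$ over special points of $C$, each a $(-2)$-curve, so $f_S$ is crepant and $S_Y=f(S)\in|-K_Y|$ is Du Val. Thus $Y$ is terminal Gorenstein. To pin down the local structure, for a general point $p\in C$ I would take a general hyperplane section $H\ni p$ of $Y$ and set $H_X:=f^*H$, $H_Y:=H\cap Y$; as $p$ avoids the finitely many singular points of $X$ lying on $C$, the surface $H_X$ is smooth near the connected curve $\Gamma=F\cap H_X$ that $f_H\colon H_X\to H_Y$ contracts to $p$. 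Since $\Gamma$ is contracted, $K_{H_X}\cdot\Gamma=K_X\cdot\Gamma=-(-K_X\cdot\Gamma)<0$, and on a smooth surface this forces $\Gamma$ to be a single $(-1)$-curve with $-K_X\cdot\Gamma=1$; its contraction is smooth, so $H_Y$, and hence $Y$, is smooth at the general point of $C$.

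Upgrading this to the assertion that $Y$ is smooth along all of $C$ requires analyzing the finitely many special points, namely the singular points of $X$ on $C$ and the singular points of $C$ itself; this local analysis is the main obstacle, and it is the technical heart of \cite{Cu}. Granting it, $Y$ is smooth along $C$, so $C\subset Y$ is a local complete intersection; the discrepancy of $F$ equals $1$ (as for the blow-up of a smooth curve in a smooth threefold), whence $f_*\OO_X(-mF)=\mathcal{I}_C^{(m)}$, and because $C$ is LCI the symbolic and ordinary powers coincide, giving $X=\operatorname{Proj}_Y\bigoplus_m\mathcal{I}_C^{\,m}=\operatorname{Bl}_C Y$. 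The threefold singularities of $X$ appear only over the singular points of $C$; as sections of an LCI curve in the smooth $Y$ they are of compound $A_n$ type, so $X$ has only $A_n$ singularities. Finally, if instead $F$ is contracted to a point $P$, I would again pass to the Du Val elephant and combine it with Mori's explicit description of extremal neighborhoods to match $f$ with one of the four configurations of \cite{Cu}; the reduction is the same in spirit, the work being the finite case-check.
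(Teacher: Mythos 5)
The paper's own ``proof'' of Theorem \ref{t-Cu} is the single line ``See \cite{Cu}'': the statement is a quotation of Cutkosky's classification of extremal contractions of Gorenstein terminal $3$-folds, and no argument is given in the text. Your proposal is therefore best read as a sketch of Cutkosky's proof, and you are explicit that the core local analysis (smoothness of $Y$ along all of $C$, the $A_n$ statement, the four divisor-to-point cases) is deferred back to \cite{Cu}; to that extent you and the paper are doing the same thing, namely citing the reference for the hard part.

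However, the one reduction you do spell out --- ruling out small contractions --- contains a step that fails. Twice you infer ``$Y$ is (terminal) Gorenstein near the image point'' from ``$S_Y=f(S)\in|-K_Y|$ is Du Val.'' That implication is false: every terminal quotient singularity, e.g.\ $\frac{1}{2}(1,1,1)$, has a Du Val general elephant (an $A_1$ point) without being Gorenstein; moreover a Du Val member of $|-K_Y|$ need not even be $\mathbb{Q}$-Cartier as a divisor on $Y$, so one cannot run inversion of adjunction to promote $K_Y+S_Y\sim 0$ to Cartier-ness of $K_Y$. This matters precisely in the small-contraction case: as the paper records in \S\ref{ss-en}, for an isolated extremal neighborhood $K_Y$ is \emph{not} $\mathbb{Q}$-Cartier, so the identity $K_X=f^*K_Y$ that you aim to contradict cannot even be formulated; the contradiction has to be produced on $X$ itself, which is how Mori and Cutkosky argue (using that $-K_X\cdot C$ is a positive integer because $K_X$ is Cartier, against adjunction on proper transforms of general hyperplane sections of $Y$ through $f(C)$, or against the estimate \eqref{e-2}). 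A second, milder issue is that invoking Theorem \ref{t-ge} to produce a Du Val elephant for a divisorial contraction to a curve is borderline circular: the paper states the general elephant theorem only for extremal neighborhoods, records the divisorial-to-curve case merely as an expectation (Remark \ref{t-el}), and in the Gorenstein setting this is usually deduced from Cutkosky's classification rather than used to prove it. So the proposal conveys the right philosophy but does not constitute a proof beyond what the citation already provides, and its only fully written-out step needs to be replaced.
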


\begin{proof}
See \cite{Cu}.
\end{proof}

We will need the following easy lemmas.
\begin{lem}\label{l-e2} Let $P\in X$ be a terminal $3$-fold singularity of
index $r>1$ and $f:Y\supset F\to X\ni P$ be an extremal divisorial contraction
of discrepancy $1/r$.
Let $G_X\in |-K_X|$ be a normal divisor with only Du Val singularities and $G_Y$ its strict transform on $Y$, then \begin{enumerate}
\item $P\in {\rm Supp}(G_X )$, \item $f^*(K_X+G_X)=K_Y+G_Y$,
\item $f|_{G_Y}:G_Y\to G_X$ is not an isomorphism, and \item $f|_{G_Y}$ is dominated by the
minimal resolution $\tilde G_X\to G_X$.\end{enumerate}
\end{lem}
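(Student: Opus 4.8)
The plan is to prove the four assertions in the order (1), (2), (3), (4), using each as input for the next. Throughout I write $f^*K_X=K_Y-\frac1r F$ (this is the hypothesis that $f$ has discrepancy $1/r$) and $f^*G_X=G_Y+cF$ with $c={\rm ord}_F f^*G_X\ge 0$, so that $a_F(X,G_X)=\frac1r-c$ and assertion (2) becomes the single statement $c=\frac1r$.

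For (1) I argue by contradiction using the index. If $P\notin{\rm Supp}(G_X)$, then the effective divisor $G_X$ vanishes on a neighbourhood of $P$, so $\cO_X(G_X)$ is trivial near $P$; since $G_X\in|-K_X|$ gives $\cO_X(G_X)\cong\cO_X(-K_X)$, this forces $-K_X$, and hence $K_X$, to be Cartier at $P$, contradicting $r>1$. For (2) I bound $c$ from both sides. The upper bound $c\le\frac1r$ is the statement $a_F(X,G_X)\ge0$: since $G_X$ is Du Val, inversion of adjunction shows $(X,G_X)$ is canonical, so every discrepancy, in particular $a_F(X,G_X)$, is nonnegative. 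For the lower bound $c\ge\frac1r$ I use (1): as $P\in{\rm Supp}(G_X)$ and $rG_X\sim -rK_X$ is Cartier near $P$, I may write $rG_X=\{h=0\}$ locally with $h(P)=0$; because $f$ contracts all of $F$ to $P$, the function $h\circ f$ vanishes along $F$, so ${\rm ord}_F f^*(rG_X)\ge1$, i.e. $c\ge\frac1r$. Hence $c=\frac1r$, which is precisely $f^*(K_X+G_X)=K_Y+G_Y$.

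Assertion (2) makes $f$ crepant for the pair, so for every divisor over $X$ the discrepancies of $(Y,G_Y)$ and $(X,G_X)$ agree; thus $(Y,G_Y)$ is canonical, hence plt along the reduced divisor $G_Y$, so $G_Y$ is normal and adjunction gives $K_{G_Y}=g^*K_{G_X}$ where $g:=f|_{G_Y}$. In particular $g$ is a crepant birational morphism of normal surfaces and $G_Y$ is again Du Val. For (3), note that $-K_Y=-f^*K_X-\frac1r F$ with $-K_Y$ $f$-ample and $f^*K_X$ $f$-trivial, so $-F$ is $f$-ample and $F\cdot\ell<0$ for every curve $\ell\subset F$; combining $f^*G_X\cdot\ell=0$ (projection formula) with $f^*G_X=G_Y+\frac1r F$ yields $G_Y\cdot\ell=\frac1r|F\cdot\ell|>0$ for all such $\ell$. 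If $g$ were an isomorphism then $G_Y\cap F$ would be finite, and a curve $\ell\subset F$ avoiding that finite set would satisfy $G_Y\cdot\ell=0$, a contradiction; hence $G_Y\cap F$ contains a curve $\Gamma$, which $g$ contracts to $P$, so $g$ is not an isomorphism. For (4), the minimal resolution $\tilde G_Y\to G_Y$ is crepant since $G_Y$ is Du Val, so $\tilde G_Y\to G_Y\to G_X$ is a crepant resolution of the Du Val surface $G_X$ and therefore coincides with its minimal resolution $\mu:\tilde G_X\to G_X$; thus $\tilde G_X\to G_Y$ is a morphism and $g$ is dominated by $\mu$.

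The step I expect to carry the real content is the two–sided bound in (2). The upper bound is a formal consequence of inversion of adjunction, but the lower bound $c\ge\frac1r$ is where the hypotheses genuinely interact: it needs (1), so that $G_X$ actually meets the center of $F$, together with the fact that $f$ contracts the whole of $F$ to the single point $P$. The remaining work is bookkeeping that I will need to check carefully rather than deep: that $G_Y$ is normal so that adjunction on $Y$ applies, and the classical surface fact that a crepant resolution of a Du Val surface is its minimal resolution, which underlies (4).
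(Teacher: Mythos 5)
Your proof is correct and follows essentially the same route as the paper's: part (1) from the non-Cartierness of $K_X$ at $P$, part (2) from the two-sided bound on ${\rm ord}_F f^*G_X$ (using $P\in{\rm Supp}(G_X)$ together with Cartierness of $rG_X$ for the lower bound, and inversion of adjunction for the upper bound), and parts (3)--(4) from crepancy of $f|_{G_Y}$ and the fact that a crepant resolution of a Du Val surface is the minimal one. You have merely filled in the details that the paper's very terse proof leaves implicit (notably the intersection-theoretic argument that $G_Y\cap F$ contains a curve).
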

\begin{proof} We have that $f^*(K_X+G_X)=K_Y+G_Y+tF$. Since $-K_X$ is not Cartier at $P$, we have that $P\in {\rm Supp}(G_X)$.
It follows that $t\geq 0$. Since $G_X$ is canonical, by inversion
of adjunction, $(X,G_X)$ is canonical. Therefore, $t\leq 0$ and hence
$t=0$. Since $F$ is a $\mathbb Q$-divisor, so is $G_Y\cap F\ne \emptyset$.
It follows that $f|_{G_Y}$ is not an isomorphism.
Since $G_X$ is Du Val and
$K_{G_Y}=(f|_{G_Y})^*K_{G_X}$, $f|_{G_Y}$ is dominated by the
minimal resolution $\tilde G_X\to G_X$.
\end{proof}
\begin{lem}\label{l-e1} Let $X$ be a terminal analytic
$3$-fold and $G_X\in |-K_X|$ be a normal divisor with only
Du Val singularities. If $E_X\in |-K_X|$ is general, then $E_X$ is a normal
divisor with only Du Val singularities and $\rho (\tilde E_X/E_X)
\leq \rho(\tilde G_X /G_X)$
where $\tilde E_X$ and $\tilde G_Y$ denote the minimal resolutions of $E_X$ and $G_X$.
\end{lem}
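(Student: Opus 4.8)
The plan is to reduce the whole statement to an upper-semicontinuity assertion for the family $|-K_X|$ near the finitely many isolated singular points of $X$. First I would localize. Since $X$ is terminal it has isolated singularities, so away from the finite set ${\rm Sing}(X)$ the sheaf $\cO_X(-K_X)$ is locally free and Bertini's theorem guarantees that a general $E_X\in|-K_X|$ is smooth there. Thus the singularities of $E_X$, and the number $\rho(\tilde E_X/E_X)$ of exceptional curves of its minimal resolution, are concentrated over ${\rm Sing}(X)$, and it suffices to compare germs at each such $P$. Normality of a general $E_X$ is then immediate: $E_X$ is a Cartier divisor in the Cohen--Macaulay variety $X$ (terminal singularities are rational, hence CM), so $E_X$ is $S_2$, while $R_1$ holds because its singular locus lies in the finite set ${\rm Sing}(X)$ together with the base locus of $|-K_X|$; Serre's criterion then gives normality.

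Next I would show that a general $E_X$ is Du Val and simultaneously obtain the inequality. By adjunction every member satisfies $K_{E_X}=(K_X+E_X)|_{E_X}\sim 0$, so once $E_X$ is normal its singularities are Du Val precisely when they are canonical. Here I would exploit the hypothesis: since $G_X$ is Du Val it is canonical, so by inversion of adjunction (as recalled in the Preliminaries) the pair $(X,G_X)$ is canonical; because the discrepancies of $(X,D)$ for $D$ ranging over the fixed system $|-K_X|$ can only improve on passing from the special member $G_X$ to a general member, the pair $(X,E_X)$ is canonical, and inversion of adjunction again forces $E_X$ to be canonical, hence Du Val. For the inequality I would use that, for a Du Val surface $S$, the count $\rho(\tilde S/S)$ of exceptional $(-2)$-curves equals the total Milnor number $\sum_{Q}\mu(Q)$, and that this is upper semicontinuous in the family $\mathcal E\to B$ parametrized by $B=|-K_X|$. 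Since the point $b_0$ giving $G_X$ lies in the Du Val locus and the general member realizes the minimum of this upper semicontinuous function, we obtain $\rho(\tilde E_X/E_X)\le\rho(\tilde G_X/G_X)$.

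The technical heart, and the step I expect to be the main obstacle, is justifying the semicontinuity of the Milnor number when the total space of $\mathcal E\to B$ is singular along ${\rm Sing}(X)\times B$, so that $E_X$ is not a priori a family of hypersurfaces in a smooth ambient space. I would resolve this by passing to the canonical cover $\pi\colon(P^\sharp\in X^\sharp)\to(P\in X)$ at each singular point: there $X^\sharp\subset\bC^4$ is a $cDV$ hypersurface $\{\varphi=0\}$, the pullback $E^\sharp=\pi^{-1}(E_X)$ is cut out by a $\mu_r$-semi-invariant $h$, and hence $E^\sharp=\{\varphi=0,\ h=0\}\subset\bC^4$ is an isolated complete intersection singularity with $h$ varying in a finite-dimensional linear family of semi-invariants of fixed weight. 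The standard upper semicontinuity of Milnor numbers for such complete intersection families applies upstairs, and since $E_X=E^\sharp/\mu_r$ the count $\rho(\tilde E_X/E_X)$ is controlled by the $\mu_r$-invariant part of the minimal resolution of $E^\sharp$, so the upstairs semicontinuity descends to the invariant we care about. Assembling the local contributions over all $P\in{\rm Sing}(X)$ then yields the global inequality.
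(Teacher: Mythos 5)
Your first half (normality and the Du Val property of a general $E_X$) coincides with the paper's argument: $G_X$ Du Val $\Rightarrow (X,G_X)$ canonical by inversion of adjunction, hence $(X,E_X)$ canonical for general $E_X$ by Koll\'ar's result on singularities of general members of linear systems, hence $E_X$ Du Val. For the inequality $\rho(\tilde E_X/E_X)\leq\rho(\tilde G_X/G_X)$, however, you take a genuinely different route (Milnor numbers and canonical covers) and it has two real gaps. First, your localization to ${\rm Sing}(X)$ is not justified: Bertini only gives smoothness of $E_X$ away from ${\rm Sing}(X)\cup {\rm Bs}|-K_X|$, and in the analytic germs to which this lemma is applied the base locus is typically nonempty (indeed the whole point of \eqref{key} is that the contracted curve $C$ may lie in every member of $|-K_X|$, i.e.\ $C\subset {\rm Bs}|-K_X|$, and $C$ contains smooth points of $X$). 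A general $E_X$ can therefore acquire Du Val points at smooth points of $X$, and these contribute to $\rho(\tilde E_X/E_X)$ but are invisible to your local analysis at ${\rm Sing}(X)$. Second, the step you yourself flag as the technical heart --- descending upper semicontinuity of $\mu(E^\sharp)$ through the $\mu_r$-quotient --- is asserted, not proved. There is no monotone formula expressing $\rho(\tilde E_X/E_X)$ in terms of the ``$\mu_r$-invariant part'' of the resolution of $E^\sharp$: the passage from the Dynkin diagram of $E^\sharp$ to that of $E^\sharp/\mu_r$ depends delicately on the action (this is precisely the nontrivial content of the general-elephant table in \eqref{r-basket}), so an inequality of Milnor numbers upstairs does not formally yield the inequality of Picard numbers downstairs.

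Both difficulties disappear if you finish the way the paper intends (this is the ``remaining assertion follows easily''): work with discrepancies rather than Milnor numbers. Fix a log resolution $Z\to X$ of $(X,G_X)$ that also resolves ${\rm Bs}|-K_X|$ and extracts all (finitely many) crepant divisors of the canonical pair $(X,G_X)$. For every divisor $F$ over $X$ one has $a(F;X,E_X)\geq a(F;X,G_X)\geq 0$ for general $E_X$, so every crepant divisor of $(X,E_X)$ is already a crepant divisor of $(X,G_X)$. By adjunction $K_{\tilde D}=(\text{pullback of }K_X+D)|_{\tilde D}$ for $D\in\{E_X,G_X\}$, the crepant divisors of $(X,D)$ with center contained in $D$ correspond exactly to the exceptional curves of the minimal resolution $\tilde D\to D$; counting therefore gives $\rho(\tilde E_X/E_X)\leq\rho(\tilde G_X/G_X)$ with no case analysis, no passage to covers, and no restriction on where the singularities of $E_X$ sit.
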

\begin{proof} Since $G_X$ has Du Val singularities, then it is canonical and hence $(X,G_X)$ is canonical. But then $(X,E_X)$ is canonical cf. \cite[\S4]{Kol95} and so $E_X$ is Du Val. The remaining assertion follows easily.
\end{proof}

\subsection{Divisorial extractions}
 \begin{thm}\label{Ha} Let $P \in X$ be a point of index $r>1$.
\begin{enumerate}
\item There is a weighted blow up $Y \to X$ at $P \in X$ with
discrepancy ${1}/{r}$.

\item Extremal divisorial contractions to $P \in X$ with
discrepancy ${1}/{r}$ are given by the weighted blow-ups classified in \cite{HaI}, \cite{HaII}.
\item Extremal divisorial contractions to $P \in X$ with
discrepancy grater than ${1}/{r}$ are classified in \cite{Kk05}.
\item There is  a partial resolution
$$X_n \to X_{n-1} \to \ldots \to X_0=X,$$ such that $X_n$ has Gorenstein singularities and each map is
a weighted blowup over a singular point of index $r_i>1$ as in
(1). We have that $n\leq \rho (\tilde E_X/E_X)$ where $E_X\in |-K_X|$ is general and
$\tilde E_X\to E_X$ is the minimal resolution.
\end{enumerate}
\end{thm}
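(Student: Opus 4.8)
Assertions (1)--(3) are recorded here for later use and require no argument beyond citation: the weighted blow-up of discrepancy $1/r$ and the classification of all extremal divisorial extractions of discrepancy $1/r$ are due to Hayakawa \cite{HaI}, \cite{HaII}, while the extractions of discrepancy greater than $1/r$ are classified by Kawakita \cite{Kk05}. The real content to be proved is therefore (4): the existence of the tower $X_n\to\cdots\to X_0=X$ terminating in a Gorenstein threefold, together with the bound $n\le\rho(\tilde E_X/E_X)$.

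The plan is to build the tower greedily. Set $X_0=X$, and as long as $X_i$ has a point $P_i$ of index $r_i>1$, let $f_i\colon X_{i+1}\to X_i$ be a weighted blow-up of discrepancy $1/r_i$ centered at $P_i$, which exists by (1). Each such extraction is again terminal, so the construction is well defined, and it stops precisely when $X_n$ has index $1$ at every point, i.e. is Gorenstein. To see that it stops, I would control the non-negative integer $\rho_i:=\rho(\tilde E_{X_i}/E_{X_i})$, where $E_{X_i}\in|-K_{X_i}|$ is general; since $X_i$ is terminal, $E_{X_i}$ is normal with only Du Val singularities, so $\rho_i$ is well defined and equals the number of exceptional curves in the minimal resolution of $E_{X_i}$. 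At the first step $\rho_0=\rho(\tilde E_X/E_X)$, so it suffices to prove $\rho_{i+1}\le\rho_i-1$ at each stage, which forces $n\le\rho_0=\rho(\tilde E_X/E_X)$.

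The decrease is where the two preliminary lemmas enter. I would take $G_{X_i}:=E_{X_i}$ and let $G_{X_{i+1}}$ be its strict transform under $f_i$. By Lemma \ref{l-e2} we have $P_i\in{\rm Supp}(G_{X_i})$, the extraction restricts to a crepant map, $f_i^*(K_{X_i}+G_{X_i})=K_{X_{i+1}}+G_{X_{i+1}}$, so $G_{X_{i+1}}\in|-K_{X_{i+1}}|$ is again a normal Du Val surface; moreover the induced map $f_i|_{G_{X_{i+1}}}\colon G_{X_{i+1}}\to G_{X_i}$ is not an isomorphism and is dominated by the minimal resolution $\tilde G_{X_i}\to G_{X_i}$. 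Since $G_{X_{i+1}}\to G_{X_i}$ is then a nontrivial crepant partial resolution factoring through $\tilde G_{X_i}\to G_{X_i}$, it resolves at least one node, whence $\rho(\tilde G_{X_{i+1}}/G_{X_{i+1}})\le\rho(\tilde G_{X_i}/G_{X_i})-1=\rho_i-1$. Finally Lemma \ref{l-e1}, applied on $X_{i+1}$ with the Du Val divisor $G_{X_{i+1}}$, gives $\rho_{i+1}\le\rho(\tilde G_{X_{i+1}}/G_{X_{i+1}})$, so $\rho_{i+1}\le\rho_i-1$ as required.

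The step I expect to be the crux is the strict inequality $\rho(\tilde G_{X_{i+1}}/G_{X_{i+1}})<\rho(\tilde G_{X_i}/G_{X_i})$: everything hinges on the induced map on the anticanonical divisor being a genuinely nontrivial partial resolution, which is precisely Lemma \ref{l-e2}(3), and on its factoring through the minimal resolution, which is Lemma \ref{l-e2}(4). Only these two facts guarantee that each blow-up of a point of index $>1$ consumes at least one curve of the Du Val resolution graph, so that the non-negative invariant $\rho_i$ cannot decrease indefinitely and the tower terminates within $\rho(\tilde E_X/E_X)$ steps.
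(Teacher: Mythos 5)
Your proposal is correct, but it is worth noting that the paper itself gives no argument here: its proof of Theorem \ref{Ha} is the bare citation ``See \cite{Kaw96}, \cite{HaI} and \cite{HaII}.'' What you have done is supply the argument for part (4) that the authors delegate to the references, and you do it using exactly the tools the paper sets up for other purposes (Lemmas \ref{l-e2} and \ref{l-e1}, which the authors deploy in their alternative proof of Theorem \ref{divSig} under the Du Val elephant hypothesis). Your induction is sound: the greedy tower is well defined because each Hayakawa extraction is again terminal, and the strict drop $\rho_{i+1}\le\rho_i-1$ follows from Lemma \ref{l-e2}(3)--(4) (the induced map on the elephant is a nontrivial crepant partial resolution dominated by the minimal resolution, so it absorbs at least one exceptional curve, and $\tilde G_{X_{i+1}}=\tilde G_{X_i}$) together with Lemma \ref{l-e1}. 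Two small points you leave implicit and should state: first, to start the induction you need that the general elephant of a terminal threefold \emph{germ} is normal with Du Val singularities --- this is Reid's theorem \cite{YPG} and is what makes $\rho_0$ well defined (the paper's Theorem \ref{t-ge} only covers extremal neighborhoods); second, termination requires that a non-Gorenstein point forces $\rho_i\ge 1$, i.e.\ that the elephant is genuinely singular at a point of index $r>1$, which follows since $-K_{X_i}$ is not Cartier there (cf.\ Lemma \ref{l-e2}(1) and Remark \ref{r-basket}). With those two remarks added, your argument is a complete and self-contained proof of (4), which is more than the paper provides.
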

\begin{proof} See \cite{Kaw96}, \cite{HaI} and \cite{HaII}.\end{proof}
\begin{defn}
In the sequel, a $w$-{\em morphism} will denote an extremal divisorial contraction to a point $P \in X$ of index $r>1$ with
discrepancy ${1}/{r}$ see \eqref{Ha}. A partial resolution as in (4) of \eqref{Ha} will be called a $w$-{\em resolution} of $P\in X$.
We define $\dep (X,P)$ the {\em depth} of $X$ at a point $P\in X$ to be
the minimum length of any $w$-resolution of (a neighborhood) of $P\in X$.
\end{defn}
\begin{rem} We have $\dep(X,P)=0$ if and only if $X$ is Gorenstein at $P$.
We have that the sum of $\dep(X,P)$ for $P\in X$ is bounded below
by the difficulty of $X$ (an invariant used to show termination of
flips) however, the inequality can be strict cf.
\cite[2.11]{HaIII}.
\end{rem}
\begin{lem}\label{l-cyclic} If $P\in X$ is a cyclic point of index $r$, then $\dep (X)=r-1$.
\end{lem}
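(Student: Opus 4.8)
The plan is to compute the depth of a cyclic quotient singularity $P\in X$ of type $\frac 1r(1,-1,b)$ by exhibiting an explicit $w$-resolution and proving it has minimal length. First I would set up the singularity concretely: a cyclic quotient singularity of index $r$ has a unique terminal structure, and its basket (see Remark \ref{r-basket}, the $cA/r$ case with $k=1$) consists of the single pair $(b,r)$. I would invoke the general elephant (Theorem \ref{t-ge}, or rather the local statement) to identify $E_X\in|-K_X|$ and its minimal resolution $\tilde E_X\to E_X$; for a cyclic quotient singularity the general elephant is a Du Val singularity whose type is recorded by the G. E. column of the table, and for $cA/r$ this is an $A_{r-1}$ singularity.

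The key computation is then twofold. On the upper-bound side, I would produce a $w$-resolution of length $r-1$ by iterating the weighted blow-up of part (1) of Theorem \ref{Ha}: each $w$-morphism over a point of index $r_i>1$ has discrepancy $1/r_i$ and strictly decreases the index (a weighted blow-up of $\frac 1r(1,-1,b)$ of discrepancy $1/r$ introduces, on the exceptional locus, quotient singularities of strictly smaller index), so the process terminates at a Gorenstein model after finitely many steps. To control the number of steps I would use part (4) of Theorem \ref{Ha}, which bounds $n$ above by $\rho(\tilde E_X/E_X)$; since $E_X$ is an $A_{r-1}$ Du Val singularity, its minimal resolution has exactly $r-1$ exceptional curves, so $\rho(\tilde E_X/E_X)=r-1$. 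This simultaneously gives $\dep(X,P)\le r-1$ and, combined with the monotonicity, pins the length.

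On the lower-bound side, I would argue that no $w$-resolution can be shorter than $r-1$. The cleanest route is again through the minimal resolution of the general elephant: by Lemma \ref{l-e2}, each $w$-morphism $Y\to X$ (discrepancy $1/r$) restricts on the general elephant to a morphism $G_Y\to G_X$ that is \emph{not} an isomorphism and is dominated by the minimal resolution $\tilde G_X\to G_X$; hence each step of a $w$-resolution contracts to a strictly smaller resolution of the corresponding Du Val singularity, accounting for at least one exceptional curve of $\tilde E_X\to E_X$. Since there are exactly $r-1$ such curves in the $A_{r-1}$ case and each step can absorb only part of them, a $w$-resolution reaching a Gorenstein (hence, on the elephant, smooth or $A$-type resolved) model must use at least $r-1$ steps. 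Combining the two estimates yields $\dep(X,P)=r-1$, and since $P$ is the only singular point of the germ, $\dep(X)=r-1$.

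The main obstacle I anticipate is the lower bound: verifying that \emph{every} $w$-resolution, not just the standard iterated weighted blow-up, has length at least $r-1$. The subtlety is that an intermediate $w$-morphism might resolve several lattice points of the $A_{r-1}$ configuration at once, so one must check that the drop in the relevant invariant (the Picard number $\rho(\tilde E/E)$ of the elephant's minimal resolution, or equivalently the invariant $\Xi$ or $\sigma$ from the table) is at most one per step. I would handle this by tracking the behavior of $G_X$ under a single $w$-morphism via Lemma \ref{l-e2}(2)--(4): because $f^*(K_X+G_X)=K_Y+G_Y$ is crepant on the elephant and $f|_{G_Y}$ is dominated by $\tilde G_X\to G_X$, the relative Picard number of the resolved elephant decreases by exactly the number of $(-2)$-curves contracted, which for a discrepancy-$1/r$ extraction at a point of type $\frac 1r(1,-1,b)$ is one. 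Establishing this per-step decrement rigorously is the crux, and once it is in place the equality $\dep(X)=r-1$ follows formally.
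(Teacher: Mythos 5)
Your upper bound is fine: Theorem \ref{Ha}(4), together with the fact that the general elephant of a cyclic quotient point of index $r$ is an $A_{r-1}$ singularity, gives $\dep(X)\leq \rho(\tilde E_X/E_X)=r-1$. The gap is in the lower bound, and it sits exactly where you flag it. To show that \emph{every} $w$-resolution has length at least $r-1$ by tracking $\rho(\tilde E/E)$ you need two facts that your argument does not supply: (a) a single $w$-morphism drops $\rho(\tilde E/E)$ by \emph{at most} one, and (b) the Gorenstein model at the end of the resolution has $\rho(\tilde E/E)=0$, i.e.\ a smooth elephant. Lemma \ref{l-e2} gives the inequality in the wrong direction for (a): it says $f|_{G_Y}$ is \emph{not} an isomorphism, hence the drop is at least one, which is useless for bounding the number of steps from below. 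As for (b), a Gorenstein terminal point need not have a smooth elephant (an ordinary double point has elephant $A_1$), and your own parenthetical ``smooth or $A$-type resolved'' concedes the issue: if a resolution could terminate at a Gorenstein point whose elephant still has $m>0$ exceptional curves, the count would only yield $\dep(X)\geq r-1-m$. Both (a) and (b) can only be checked by knowing what the $w$-morphisms over a cyclic quotient point actually look like, which is the one input your proof never uses.

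That input is precisely the paper's (much shorter) proof: by Kawamata's classification, the only $w$-morphism over a point of type $\frac1r(a,-a,1)$ is the weighted blow-up with weights $\frac1r(a,r-a,1)$, and $Y$ then has (at most) two cyclic quotient points, of indices $a$ and $r-a$. Induction on the index gives $\dep(X)=1+(a-1)+(r-a-1)=r-1$ for every $w$-resolution simultaneously, with no elephants needed. Since you must invoke this classification anyway to control a single step, the invariant $\rho(\tilde E/E)$ buys you nothing here; I would replace the elephant bookkeeping by the direct induction.
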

\begin{proof} The only $w$-morphisms are given by weighted blow ups $f:Y\to X$ such that $Y$ contains (at most) two quotient singularities of indices $a$ and $r-a$. The proof now follows easily by induction on the index.
\end{proof}
\begin{prop}\label{p-cA} If $(P\in X)$ is of
$cA/r$ type, then $\Xi(P\in X)-aw(P\in X)\leq \dep (P\in X)\leq \Xi(P\in X)-1$.
\end{prop}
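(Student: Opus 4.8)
The plan is to prove the two inequalities separately: the upper bound comes straight from the general elephant together with part (4) of \eqref{Ha}, while the lower bound comes from tracking the invariant $\Xi-aw$ through an arbitrary $w$-resolution. First I would read off from the table in \ref{r-basket} that for a $cA/r$ point one has $\Xi(P\in X)=kr$, $aw(P\in X)=k$, and that a general elephant $E_X\in|-K_X|$ carries an $A_{kr-1}$ Du Val singularity.

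For the upper bound this is immediate. The minimal resolution $\tilde E_X\to E_X$ of an $A_{kr-1}$ singularity is a chain of $kr-1$ $(-2)$-curves, so $\rho(\tilde E_X/E_X)=kr-1=\Xi(P\in X)-1$. By part (4) of \eqref{Ha} there is a $w$-resolution of $P\in X$ of length $n\le \rho(\tilde E_X/E_X)$, and since $\dep(P\in X)$ is by definition the minimal length of a $w$-resolution we get $\dep(P\in X)\le \Xi(P\in X)-1$.

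For the lower bound the key claim is that, for an \emph{arbitrary} $w$-morphism $f:Y\to X$ centred at a point of index $r>1$, one has
\[
\Xi(Y)-aw(Y)\ \ge\ \Xi(X)-aw(X)-1.
\]
Granting this, the argument runs as follows. Both $\Xi$ and $aw$ are additive over the singular points, and they vanish at Gorenstein (index one) points, since the basket of such a point is empty. Hence along any $w$-resolution $X_n\to\cdots\to X_0=X$ with $X_n$ Gorenstein, the quantity $D_i:=\Xi(X_i)-aw(X_i)$ satisfies $D_0=\Xi(P\in X)-aw(P\in X)=k(r-1)$, $D_n=0$, and $D_{i-1}-D_i\le 1$ at each of the $n$ steps by the claim. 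Telescoping gives $k(r-1)=D_0-D_n=\sum_{i=1}^n(D_{i-1}-D_i)\le n$, so $n\ge k(r-1)$; taking the minimum over all $w$-resolutions yields $\dep(P\in X)\ge \Xi(P\in X)-aw(P\in X)$. As a consistency check, when $k=1$ the point is the cyclic quotient singularity $\frac1r(1,-1,b)$, both bounds collapse to $r-1$, and this agrees with \ref{l-cyclic}.

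The main obstacle is the displayed claim on the change of $\Xi-aw$. Since $X$ and $Y$ agree away from the centre, one only has to control the singularities of $Y$ lying over $P$, and I would verify the claim from the explicit description of the weighted blow-ups realising $w$-morphisms of $cA/r$ points in \cite{HaI}, \cite{HaII} (cf. also \cite{Kaw96}): one checks that the points appearing on $Y$ are again cyclic quotient or $cA/r$ points, and that the induced splitting of the basket satisfies $\Xi(Y)=\Xi(X)$ while $aw(Y)\le aw(X)+1$, which gives the inequality. The model is the cyclic computation behind \ref{l-cyclic}, where the blow-up replaces a point of index $r$ by at most two points of indices $a$ and $r-a$, so that $\Xi$ is preserved and $aw$ increases by exactly one; the content of the claim is that the general $cA/r$ case can do no worse than this model.
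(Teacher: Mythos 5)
Your proposal is correct, but it is organized quite differently from the paper's argument, which establishes the single exact formula $\dep(P\in X)=\lambda r-t$, where $\lambda=aw(P\in X)$ and $t\in\{1,\ldots,\lambda\}$ is defined from the Newton data of $g(z^r,u)$; both inequalities then drop out at once since $\lambda r=\Xi(P\in X)$. You instead prove the two bounds separately: the upper bound via the general elephant $A_{kr-1}$ and part (4) of \eqref{Ha} (which is precisely how the paper handles the non-$cA/r$ cases in \eqref{p-de}, so this is a legitimate shortcut that avoids the Newton-polygon bookkeeping entirely), and the lower bound by showing the monovariant $\Xi-aw$ drops by at most $1$ under each $w$-morphism and telescoping along an arbitrary $w$-resolution. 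Your key claim is exactly what the paper's inductive step verifies: a $w$-morphism over a $cA/r$ point with weight $\frac1r(r_1,r_2,1,r)$, $r_1+r_2=r\nu_1$, produces cyclic points of indices $r_1,r_2$ and possibly a $cA/r$ point $Q_4$ with $aw(Q_4)=\lambda-\nu_1$, whence the contribution to $\Xi-aw$ is $\lambda(r-1)+\nu_1-2\geq\lambda(r-1)-1$ since $\nu_1\geq 1$; and since all singularities appearing are again cyclic or $cA/r$, the induction closes up. Two small caveats: your parenthetical justification ``$\Xi(Y)=\Xi(X)$ and $aw(Y)\leq aw(X)+1$'' is not literally right when $r_1=1$ or $r_2=1$ (a smooth point leaves the basket, so $\Xi$ can drop), but the displayed inequality for $\Xi-aw$ is what you actually use and it survives; and the claim must be checked for \emph{every} $w$-morphism permitted by Hayakawa's classification, not just a convenient one, since $\dep$ is a minimum --- your computation does cover all of them because $r_1+r_2=r\nu_1$ is forced. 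The trade-off is that the paper's route yields the exact value of $\dep$ (used implicitly elsewhere, e.g.\ in the $cA/r$ case of \eqref{p-dep}), while yours yields only the stated two-sided bound.
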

\begin{proof} We follow \cite{HaI}. The singularity $P\in X$ is given by the equation $\varphi=xy+g(z^r,u)=0$ with weights $\frac 1 r (\beta , -\beta , 1, r)$.
Write $g(z^r,u)=\sum a_{i,j}z^{ri}u^j$, then we let $$\lambda:=aw(P\in X)={\rm min}\{j|a_{0,j}\ne 0\},$$ and for any $s>0$, we let
$$\nu _s:={\rm min}\{si+j|a_{i,j}\ne 0\}.$$
Notice that $\nu _1\leq \nu _2\leq \cdots $ and $\nu _s=\lambda $ for any $s\geq \lambda$.  Let $$t:={\rm min }\{ s>0|\nu _s=\lambda \},$$ then clearly $1\leq t\leq \lambda$.
We will show that
$$\dep(P\in X)= \lambda r-t.$$
The only $w$-morphisms $f:X_1\to X$ are weighted blow ups with weights $\frac 1 r (r_1,r_2,1,r)$ such that $r_1+r_2=r\nu _1$ \cite[\S 6]{HaI}.
We may assume that  $f:X_1\to X$ is the first $w$-morphism in a $w$-resolution of $P\in X$ computing $\dep (X)$.
One computes that on $X_1$ there are two quotient singularities $Q_1$ of type
$\frac 1 {r_1}(r,-1,-r)$ and $Q_2$   of type
$\frac 1 {r_2}(r,-1,-r)$ and (possibly) a singularity $Q_4$ of type $cA/r$ defined by $$xy+g(z^ru,u)/u^{\nu _1}=xy +\sum a_{i,j}z^{ri}u^{i+j-\nu _1}=0$$ with weights $\frac 1 r (\beta , -\beta, 1, r)$.
If $\lambda =\nu _1$ then $Q_4\not\in X_1$ and
$\dep (P\in X)=r_1+r_2-1=\nu _1r-1$.
Otherwise $$\dep (P\in X)=\dep (X_1)+1=\nu_1r-1+\dep (Q_4\in X).$$
Notice that
$$\left\{ \begin{array}{l}
\lambda(Q_4 \in X_1)=aw(Q_4\in X_1):=\min\{j-\nu_1| a_{0,j}\ne 0 \} = \lambda -\nu _1 \\
\nu_{s-1}(Q_4 \in X_1) =\min\{(s-1)i+(i+j-\nu_1) |a_{i,j} \ne 0\}  =\nu_{s} -\nu _1 \end{array} \right.$$ So $t(Q_4 \in X_1)=t-1$.  By induction on $aw$,
we have $\dep (Q_4\in X_1)=(\lambda-\nu _1) r-(t-1)$ and hence
$\dep(P\in X)= \lambda r-t$ the assertion now follows as
$\lambda r=aw(P\in X)r= \Xi (P\in X).$
\end{proof}
\begin{prop}\label{p-de} We have the following
\begin{enumerate}
\item if $(P\in X)$ is of type $cAx/4$, then $\dep (P\in X)\leq \Xi(P\in X)-1$,
\item if $(P\in X)$ is of type $cD/2$ or $cD/3$ then $\dep (P\in X)\leq \Xi(P\in X)$, and
\item if $(P\in X)$ is of type $cE/2$, then $\dep (P\in X)\leq \Xi(P\in X)+1$.\end{enumerate}
\end{prop}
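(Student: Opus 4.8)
The plan is to derive all three estimates as immediate consequences of Theorem \ref{Ha}(4) together with the ``G. E.'' column of Remark \ref{r-basket}. Recall that Theorem \ref{Ha}(4) produces, for any terminal point $P\in X$ of index $r>1$, a $w$-resolution $X_n\to\cdots\to X_0=X$ whose length satisfies $n\le \rho(\tilde E_X/E_X)$, where $E_X\in|-K_X|$ is general and $\tilde E_X\to E_X$ is the minimal resolution of the (normal, Du Val) surface $E_X$. Since $\dep(P\in X)$ is by definition the minimal length among all $w$-resolutions of $P\in X$, this gives at once
$$\dep(P\in X)\le \rho(\tilde E_X/E_X),$$
so the problem reduces entirely to computing the relative Picard number $\rho(\tilde E_X/E_X)$, i.e. the number of exceptional curves of the minimal resolution of the general elephant.

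Next I would read off the type of the general elephant from Remark \ref{r-basket} in each of the four cases, using that the minimal resolution of a Du Val singularity carries exactly as many exceptional $(-2)$-curves as its Dynkin diagram has vertices. For a $cAx/4$ point the general elephant is $D_{2k+1}$, so $\rho(\tilde E_X/E_X)=2k+1=\Xi(P\in X)-1$; for a $cD/2$ point it is $D_{2k}$, so $\rho=2k=\Xi(P\in X)$; for a $cD/3$ point it is $E_6$, so $\rho=6=\Xi(P\in X)$; and for a $cE/2$ point it is $E_7$, so $\rho=7=\Xi(P\in X)+1$. In each case $\rho(\tilde E_X/E_X)$ is precisely the asserted bound, and the three inequalities follow. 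The same mechanism underlies the upper bound of Proposition \ref{p-cA}, where the general elephant $A_{kr-1}$ has $\rho=kr-1=\Xi-1$; there the extra effort only served to pin down the exact value of the depth, which is not required here.

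The role of Lemma \ref{l-e1} is to guarantee that a general $E_X\in|-K_X|$ realizes the smallest resolution among all normal Du Val members of $|-K_X|$, so that $\rho(\tilde E_X/E_X)$ really is the Dynkin count attached to the singularity type recorded in the table and not something larger. Granting this, the proof is a one-line comparison of the ``G. E.'' and ``$\Xi$'' columns. A more hands-on alternative, parallel to Proposition \ref{p-cA}, would instead perform an explicit discrepancy-$1/r$ weighted blow-up from the classification of \cite{HaI} and \cite{HaII}, identify the cyclic, $cA/r$ and lower axial weight singularities it produces, and induct on $aw$; but this is longer and the general elephant bound is both cleaner and exactly sharp for the stated inequalities. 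I expect the only genuine obstacle to be justifying the general elephant types, which is however already furnished by Remark \ref{r-basket} (Reid's general elephant classification for these four families).
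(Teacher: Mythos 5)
Your proposal is correct and is exactly the paper's argument: the authors' entire proof reads ``This follows immediately from (4) of \eqref{Ha} and \eqref{r-basket},'' and your write-up simply makes explicit the comparison $\dep(P\in X)\le\rho(\tilde E_X/E_X)$ with the Dynkin counts $D_{2k+1}$, $D_{2k}$, $E_6$, $E_7$ against the $\Xi$ column. The numerical checks all agree with the table, so nothing further is needed.
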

\begin{proof} This follows immediately from (4) of \eqref{Ha} and \eqref{r-basket}.
\end{proof}
\begin{prop}\label{p-dep} Let $f:Y\supset E\to X\ni P$ be (the germ of) a divisorial contraction to a point. Then $$\dep (Y)\geq \dep(X)-1.$$
\end{prop}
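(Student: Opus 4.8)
The plan is to establish the equivalent inequality $\dep(X)\le \dep(Y)+1$ by exhibiting a $w$-resolution of $X$ of length at most $\dep(Y)+1$. If $P$ is a Gorenstein point then $\dep(X)=0$ and there is nothing to prove, so from the outset I would assume that $P\in X$ has index $r>1$. The whole content is then to account for the single extraction $f^{-1}\colon X\dashrightarrow Y$ as ``one step'' towards making $X$ Gorenstein.

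First I would fix an optimal $w$-resolution $Y_m\to Y_{m-1}\to\cdots\to Y_0=Y$, with $m=\dep(Y)$ and $Y_m$ Gorenstein, and prepend $f$ to obtain a proper birational morphism $Y_m\to\cdots\to Y_0=Y\xrightarrow{f}X$ from a Gorenstein $3$-fold, consisting of $m$ $w$-morphisms followed by the single divisorial contraction $f$. If $f$ is itself a $w$-morphism, i.e. it has discrepancy $1/r$ (this is the case of \eqref{Ha}(2), and by \cite{Kaw96} the only possibility when $P$ is a cyclic quotient point), then this chain is already a $w$-resolution of $X$ of length $m+1$, whence $\dep(X)\le \dep(Y)+1$. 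By Lemma~\ref{l-cyclic} this in fact forces equality in the cyclic case.

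The substance of the argument is therefore the remaining case, in which $f$ has discrepancy strictly greater than $1/r$; these are the contractions classified in \eqref{Ha}(3) and \cite{Kk05}. Here $f$ is not a $w$-morphism and cannot serve as the bottom step of a $w$-resolution, so instead I would choose a genuine $w$-morphism $h\colon X_1\to X$ over $P$, which exists by \eqref{Ha}(1). Since $\dep(X)\le \dep(X_1)+1$, it suffices to prove the comparison $\dep(X_1)\le \dep(Y)$. To this end I would bring in the general elephant: a general $E_X\in|-K_X|$ is normal with Du Val singularities (see the G.\,E. column of Remark~\ref{r-basket}), and by Lemma~\ref{l-e2} its strict transform on the $w$-morphism $X_1$ is dominated by the minimal resolution $\tilde E_X\to E_X$ via a map that is not an isomorphism. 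Combined with Lemma~\ref{l-e1} and the bound $\dep(X_1)\le\rho(\tilde E_{X_1}/E_{X_1})$ of \eqref{Ha}(4), this controls $\dep(X_1)$ in terms of the minimal resolution of the elephant of $X$; a parallel analysis of $E_Y$ under $f$, using \cite{Kk05} to pin down how the higher-discrepancy divisor $E$ meets the elephant, should then yield $\dep(Y)\ge \dep(X_1)$.

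The main obstacle is exactly this last comparison. One must show that extracting the higher-discrepancy divisor $E$ to form $Y$ is at least as expensive, in depth, as extracting a minimal-discrepancy divisor to form $X_1$. Because depth is only bounded above, and not computed, by $\rho(\tilde E/E)$ (a Gorenstein $cDV$ point already has positive $\rho$ but depth $0$), I do not expect the elephant invariant alone to deliver the needed lower bound on $\dep(Y)$, and I anticipate that one must descend to the explicit list of divisorial contractions to each singularity type of Remark~\ref{r-basket}, tracking the induced change of basket and hence of the bounds in Propositions \ref{p-cA} and \ref{p-de} contraction by contraction. This case analysis against the classification of \cite{Kk05} is where I expect the real difficulty to lie.
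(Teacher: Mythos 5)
Your reduction is sound as far as it goes --- the Gorenstein case, the case where $f$ is itself a $w$-morphism, and the identification of the higher-discrepancy contractions of \cite{Kk05} as the locus of difficulty all match the paper --- but the proposal stops exactly where the proof begins. The case analysis you defer to at the end is not a routine verification: it occupies essentially the whole of the paper's argument, and the tools it requires are not the ones you sketch. In particular, the route through an auxiliary $w$-morphism $h\colon X_1\to X$, the general elephant, and the bound $\dep(X_1)\le\rho(\tilde E_{X_1}/E_{X_1})$ is not what the paper does, and you are right to doubt it: since $\rho(\tilde E/E)$ only bounds depth from above, it cannot produce the lower bound on $\dep(Y)$ that you need, and the paper never attempts this comparison.

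What the paper actually does in each nontrivial case is the following. For $P\in X$ of type $cA/r$ it computes $\dep$ exactly on both sides using the formula $\dep=\lambda r-t$ from the proof of \eqref{p-cA}, reading off the singularities of $Y$ from the explicit weighted blow-up, and obtains $\dep(Y)=\dep(X)+a-2$. For the exceptional types e1 and e2 the key input you are missing is numerical: the inequality $\chi(\mathcal O_Y(2K_Y))\ge\chi(\mathcal O_X(2K_X))+1$ of \eqref{coh-0}, proved by vanishing theorems, is combined with Reid's singular Riemann--Roch formula to bound the axial weight of $P\in X$ in terms of the index of the quotient singularity on $Y$; only then does the bound $\dep(X)\le\Xi(X)=2\,aw(P\in X)$ from \eqref{p-de} become useful, while $\dep(Y)$ is computed directly from \eqref{l-cyclic}. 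For the remaining ordinary type (a $cD/2$ point with discrepancy $a/2>1/2$) the paper constructs by hand an alternative $w$-resolution of $X$ as an explicit chain of $a$ weighted blow-ups with alternating weights, verifies termwise that each step is a $w$-morphism, and compares the resulting length with $\dep(Y)$. None of these three arguments appears in your outline, so the proposal as written has a genuine gap: it is a correct framing of the problem together with an acknowledgement that the problem remains to be solved.
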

\begin{proof} We follow the classification of \cite{Kk05}.
There are several cases to consider.

If $P\in X$ is Gorenstein, then $\dep (P\in X)=0$ and the claim is clear.

If $f$ is an $w$-morphism, then by definition of $\dep(X)$, we have
$\dep (X)\leq \dep (Y)+1$ as required.

If $ P\in X$ is of type $cA/r$, then $f$ is a weighted blow up
of discrepancy $a/r$ (cf. \cite[Thm. 1.2.i]{Kk05}).
With the notation of Proposition \eqref{p-cA}, then this is obtained by using
a weight $\frac 1 r (r_1,r_2,a,r)$ such that $r_1+r_2\equiv 0$ $({\rm mod}\ ar)$, $$\frac{r_1+r_2}{r}=\tau-wt(g)=\nu_a$$
where
$\tau-wt(z,u)=(\frac{a}{n},1)$, and
$z^{\frac{r_1+r_2}{a}}=z^{\frac{r\nu_a}{a}} \in g$.
It follows that $\nu_s \le \frac{s \nu_a}{a} < \nu_a$ for all $s <a$ and
therefore $t \ge a$.

There are two quotient singularities $Q_1,Q_2\in Y$ of index $r_1$ and $r_2$ and possibly a $cA/r$ point $Q_4\in Y$ defined by
$$\{xy-g(z^ru^a,u)/u^{\nu _a}=0\}/\mathbb Z _r(-r_1, -r_2 , -a , r)$$
where $g(z^ru^a,u) /u^{\nu _a}=\sum a_{i,j}z^{ri}u^{ai+j-\nu _a}$.
It follows that  $$\sum \dep(Q_i\in Y)=r_1-1+r_2-1+\dep (Q_4\in Y)=r\nu _a -2+ \dep (Q_4\in Y).$$
Proceeding as in \eqref{p-cA}, one sees that
$\dep (Q_4\in Y)=(\lambda -\nu _a)r-(t-a)$
and hence $$\dep (Y)=\lambda r -t+a-2 =\dep (X) +a-2 \ge \dep(X)-1.$$

The remaining cases are classified in \cite[1.2, 1.3]{Kk05}.
If $P \in X$ is not of the type $cA/n$ and the
discrepancy is $a/n>1/n$, then  there are $6$ cases.
Two cases of ordinary type with arbitrarily large
discrepancies
which are given by weighted blow-ups (cf. \cite[Theorem
1.2.ii]{Kk05}), and  $4$ cases of exceptional type (cf. \cite[Table
3]{Kk05}). We would like to remark that in general, there may be some {\it
hidden non-Gorenstein point} on $Y$ in (i.e.  non-Gorenstein points on $Y$
at which the exceptional divisor is Cartier cf. \cite[pg. 59]{Kk05}).
However, such hidden non-Gorenstein points do not occur in the
cases of exceptional type (e1, e2, e11) because we have $a/n=4/2$ or $2/2$ and thus $2K_Y \sim_X 4E$ or $2K_Y \sim_X 2E$.
It follows that if $E$ is Cartier at a point $Q \in Y$ then $K_Y$ is numerically equivalent to a Cartier divisor and so $K_Y$ is Cartier at $Q \in Y$. Hence, $Y$ is Gorenstein at $Q$.
Therefore the baskets of $Y$ and $X$ are completely described by \cite[1.3]{Kk05}.

Recall that by Reid's singular Riemann-Roch formula we have that
$$\chi(2K_X)=\frac{1}{2}K_X^3-3\chi(\mathcal O_X)+\sum_{\mathcal B (X)} \frac{b_i(r_i-b_i)}{2r_i},$$
and therefore
$$\chi(2K_Y)-\chi(2K_X)=\frac{1}{2}(\frac a n E)^3+\sum_{\mathcal B (Y)} \frac{b_i(r_i-b_i)}{2r_i}-\sum_{\mathcal B( X)} \frac{b_i(r_i-b_i)}{2r_i}.$$
\begin{claim}\label{coh-0} Let $f:Y\supset E\to X\ni P$ be a divisorial contraction of terminal $3$-folds such that $a(E,X)=1$ (resp. $a(E,X)=2$). Then
$$\chi (\mathcal O _Y(2K_Y))\geq \chi (\mathcal O _X(2K_X))+1.$$ 
\end{claim}
\begin{proof} If $K_Y=f^*K_X+E$, then $R^kf_*\mathcal O _Y(2K_Y-E)=0$ for $k>0$
by Kawamata-Viehweg vanishing.
Since $Y$ is terminal it is Cohen-Macaulay and hence so is $E$ cf. \cite[5.25]{K-M}.
Therefore $K_E=(K_Y+E)|_E$ and we have a short exact sequence (cf. \cite[5.26]{K-M})
$$0\to \mathcal O _Y(2K_Y-E)\to \mathcal O _Y(2K_Y)\to \mathcal O_E(K_E)\to 0.$$
Since $-K_E=-2E|_E$ is ample, $h^1(\mathcal O_E(K_E))=0$ and so
$R^1f_*  \mathcal O _Y(2K_Y)=0$. Moreover, we have $R^2f_*  \mathcal O _Y(2K_Y)
\cong R^2f_*  \mathcal O _E(K_E)\cong H^2(\mathcal O _E(K_E))\cong \mathbb C$.
By a Leray spectral sequence computation, we obtain
$$\chi(\mathcal O_Y(2K_Y))=\sum (-1)^i\chi(R^if_* \mathcal O_Y(2K_Y))= \chi(f_* \mathcal O_Y(2K_Y))+\chi(R^2f_* \mathcal O_Y(2K_Y))$$
$$=\chi(f_* \mathcal O_Y(2K_Y))+h^0(R^2f_* \mathcal O_Y(2K_Y))=\chi (\mathcal O _X(2K_X))+1.$$

If $K_Y=f^*K_X+2E$, then  by Kawamata-Viehweg vanishing, $R^kf_*\mathcal O _Y(2K_Y-2E)=0$ for $k>0$.
By Serre duality we have $$h^1(\mathcal O _E(2K_Y))=h^1(\mathcal O _E(4E))=h^1(\mathcal O _E(K_E-4E)).$$ By Kawamata-Viehweg vanishing, one sees that $h^1(\mathcal O _E(K_E-4E))=0$ and
hence $h^1(\mathcal O _E(2K_Y))=0$.

Similarly $h^1(\mathcal O _E(2K_Y-E))=0$.
From the short exact sequences
$$0\to \mathcal O_Y(2K_Y-(l+1)E)\to \mathcal O_Y(2K_Y-lE)\to \mathcal O_E(2K_Y-lE)\to 0$$
for $l\in \{1,0 \}$, one sees that $R^1f_*\mathcal O_Y(2K_Y-E)=R^1f_*\mathcal O_Y(2K_Y)=0$ and $\mathbb C \cong R^2f_*\mathcal O _Y(2K_Y-E)\hookrightarrow R^2f_*\mathcal O _Y(2K_Y)$.
The required inequality follows similarly to the previous case.
\end{proof}

\noindent{\bf e1.} $f$ is of type $e1$ with
$\frac{a}{n}=\frac{4}{2}$. We have that $\frac{4}{2}E^3=\frac{4}{r}$ by Table 2 of \cite{Kk05} (compare with Table 3, were  $r=2r'$).   Notice also that there is only a
quotient singularity $Q \in Y$ of type $\frac{1}{2r'}(1,-1,r'-4)$.
Thus the basket of $Y$ is $(r'-4,2r')$ and
we  have $\dep(Y) = 2r'-1$.

On the other hand $P \in X$ is a point of type $cD/2$. Notice
that $\sigma(P \in X)=aw(P \in X)$ in this case and $\mathcal B (X)
=\{aw(P \in X) \times (1,2)\}$.

We now estimate $\dep (X)$.
By the singular Riemann-Roch formula, we can compute:
$$ \chi(2K_Y)-\chi(2K_X)=\frac{1}{2} 8E^3+ \frac{(r'-4)(r'+4)}{4r'} - \frac{1}{4} aw(P \in X).$$
Since $\chi(2K_Y)-\chi(2K_X) \ge 1$ cf. \eqref{coh-0} and $E^3=2/r$, we
have
${r'}-1 \ge aw(p \in X).$ Therefore, by \eqref{p-de}
$$ \dep(X) \le \Xi(X)=2 aw(P\in X) \le 2 r'-2=\dep(Y)-1$$ as required.

$f$ is of type $e1$ with
$\frac{a}{n}=\frac{2}{2}$. We have that
$\frac{2}{2}E^3=\frac{4}{r}$ with $r=2r'$, $P \in X$ is a point
of the type $cD/2$ and $Q\in Y$ is a quotient singularity
of type $\frac{1}{2r'}(1,-1,r'-2)$.
We thus have $\dep(Y) = 2r'-1$.

By the singular Riemann-Roch
formula, we have
$$ \chi(2K_Y)-\chi(2K_X)=\frac{1}{2} E^3+ \frac{(r'-2)(r'+2)}{4r'} - \frac{1}{4} aw(P \in X).$$
Since $\chi(2K_Y)-\chi(2K_X) \ge 1$ cf. \eqref{coh-0}, we
have that ${r'}-1 \ge aw(P \in X)$ so that
$$ \dep(X) \le \Xi(X)=2 aw(P\in X) \le 2 r'-2=\dep (Y)-1$$ as required.

\noindent{\bf e2.} $f$ is of type $e2$ with
$\frac{a}{n}=\frac{2}{2}$.  We have that
$\frac{2}{2}E^3=\frac{2}{r}$ with $r=2r'$ and $P \in X$ is a point
of the type $cD/2$. There is a singularity $Q \in Y$ of type
$cA/2r'$ deforming to  $2 \times \frac{1}{2r'}(1,-1,r'-1)$.
We thus have $\dep(Y) \in \{ 4r'-1,4r'-2 \}$.


By the
singular Riemann-Roch formula, we compute:
$$ \chi(2K_Y)-\chi(2K_X)=\frac{1}{2} E^3+ 2 \frac{(r'-1)(r'+1)}{4r'} - \frac{1}{4} aw(P \in X).$$
Since $\chi(2K_Y)-\chi(2K_X) \ge 1$ cf. \eqref{coh-0}, we thus
have $ {2r'}-1 \ge aw(P \in X)$. Hence
$$ \dep(X) \le \Xi(X)=2 aw(P\in X) \le 4 r'-2\le \dep (Y) $$ and we are done.

\noindent{\bf e11.} If $f $ is of type $e11$, then $P\in X$ is of
type $cE/2$ and so $\dep (X)\leq 7$. There are two quotient
singularities $Q_1,Q_2\in Y$ of type $\frac{1}{2}(1,1,1)$ and
$\frac{1}{6}(1,-1,-1)$ respectively. Thus $\dep(Y)=6\geq \dep (X)-1$
as required.

\noindent{\bf o3}. The remaining case to consider is classified in \cite[1.2.ii]{Kk05}. $f$ is a divisorial contraction to a point $P\in X$ of type $cD/2$ with discrepancy $a/2 > 1/2$. $f$ is described explicitly as a weighted blow up and the required inequality follows from a direct computation which we describe below.

We first consider the case in \cite[1.2.ii.a]{Kk05}, so we assume that the embedding is given by $\varphi =0$ where
$$\begin{array}{ll}
\varphi &=u^2+y^2z+g(x^2,z)+uxq(x^2,z)+\lambda yx^{2\alpha-1} \\
 & =u^2+y^2z+\sum a_{ij} x^{2i}z^j+u\sum b_{ij} x^{2i+1}z^j+\lambda yx^{2\alpha-1}
 \end{array}$$
and $\mathbb Z _2$ acts with weights $\frac{1}{2}(1,1,0,1)$.

The corresponding divisorial contraction is given by a weighted blowup $Y \to X$ with  weights
$\sigma_a=\frac{1}{2}(a,r,2,r+2)$. We have that $a|(r+1), a\equiv r \equiv 1 \quad (\text{mod} \ 2)$, and so we may
assume that $r+1=2ad$.  We may also assume that $a \ge 3$.
Since the weight of $\varphi$ with respect to $\sigma _a$ is $r+1$, we
have
$$\left\{  \begin{array}{l}\min\{ai+j| a_{ij} \ne 0 \} \ge r+1=2ad,\\
 \min\{ (2i+1)a+2j | b_{i,j}\ne 0\} \ge r=2ad-1, \\
 (2 \alpha-1)a \ge r+2=2ad+1. \end{array} \right.\eqno(\ddagger)$$

One sees that $Q_2$ (resp. $Q_4$) is a cyclic quotient point of index $r$ (resp. $r+2$).
$Q_3$ is a  point of index $2$ given by
$$
\begin{array}{ll}\bar \varphi &=u^2z+y^2+\bar{g}(x^2,z)+ux\bar{q}(x^2,z)+\lambda
yx^{2\alpha-1}z^{\bar{\delta}}\\
&=u^2z+y^2+\sum a_{ij} x^{2i}z^{\bar{\beta}_{ij}}+u\sum b_{ij}
x^{2i+1}z^{\bar{\gamma}_{ij}}+\lambda
yx^{2\alpha-1}z^{\bar{\delta}}
\end{array}\eqno(\dagger)$$

where $$\left\{ \begin{array}{ll}\bar{\beta}_{ij}&=ai+j-r-1, \\
\bar{\gamma}_{ij}&=\frac{2ai+a+2j-r}{2},\\
\bar{\delta}&=\frac{2a\alpha-a-r-2}{2}. \end{array} \right.$$

{\bf Claim.} $Q_3$ is a terminal singularity of type $cD/2$.

To see this, we use the coordinates
$\bar u:=y+\frac{\lambda}{2}x^{2 \alpha-1}z^{\bar{\delta}}$ and
$\bar y:=u+\frac{1}{2}\sum_{\bar{\gamma}_{ij} \ne 0} b_{ij}
x^{2i+1}z^{\bar{\gamma}_{ij}-1}$. Then we have
$$\bar\varphi =\bar u^2+\bar y^2z+ \bar y(\sum_{\bar{\gamma}_{ij} =0} b_{ij}
x^{2i+1}) +\bar g(x^2,z) .$$ for some $\bar g$. We may assume that
$(\sum_{\bar{\gamma}_{ij} =0} b_{ij}
x^{2i+1})=\bar\lambda\bar x^\epsilon$ for some $\epsilon>0$ and  $\bar\lambda
\in \mathbb C$ by replacing $x$ by a
suitable coordinates $\bar x$. Thus, this is a
terminal singularity of type $cD/2$. $\qed$

Notice that $Q_1$ is possibly a singular
point of index $a$. However, since all hidden non-Gorenstein points are of index
$2$, and $a \ge 3$ is an odd
integer, $Q_1$ is not a singular point of $Y$.
Therefore, there
is a term of the form $x^b$ in $\varphi$ with $wt(x^b)=r+1$. In
other words, $x^{4d} \in \varphi$.
In conclusion, we have that $$\dep(Y)=(r-1)+(r+1)+\dep(Q_3).$$

We will now compute $\dep(X)$ by comparing with $\dep(Y)$.
We will
construct a sequence of weighted blow ups
$$Z_a\to \ldots \to Z_{k+1}\to Z_{k}\to \ldots \to Z_0=X$$ such that
\begin{enumerate}
\item each map $\phi_k: Z_{k+1} \to Z_k$ is a weighted blowup with weights $\sigma =\sigma_{\rm ev}:=\frac{1}{2}(1,2d-1,2,2d+1)$ if $k$ is even and $\sigma =\sigma_{\rm odd}:=\frac{1}{2}(1,2d+1,2,2d-1)$ if $k$ is odd;

\item moreover, let $U_k^i$ be the standard affine piece of the weighted blowup $Z_k \to Z_{k-1}$ and $R_k^i$ be the origin of $U_k^i$. The weighted blowup $\phi_k: Z_{k+1} \to Z_k$ center at $R_k^3$.
\end{enumerate}

We need the following claims.

\noindent{\bf Claim 1.} On $Z_k$, there are two cyclic singularities $R^+_k,R^-_k$ of index $2d+1$, $2d-1$ respectively such that $\{R^+_k,R^-_k\}=\{R^2_k,R^4_k\}$. \\
\noindent{\bf Claim 2.} On $Z_k$, there is a $cD/2$ singularity $R^3_k$ and $R^3_a \in U^3_a \cong U_3 \ni Q_3$. \\
\noindent{\bf Claim 3.} $Z_k$ is terminal and $Z_k-\{ R^2_k,R^3_k,R^4_k\}$ is Gorenstein for all $k$.\\
\noindent{\bf Claim 4.} $\phi_k: Z_{k+1} \to Z_k$ is a $w$-morphism for all $k$.\\

Grant these claims for the time being. We then have that
$$\dep(X) \le a+\dep(R^3_a)+\sum_{k=1}^a (\dep(R^+_k)+\dep(R^-_k))$$
$$=a+\dep(Q_3)+a(2d-2+2d)=a+\dep(Q_3)+4ad-2a$$
$$=\dep(Q_3)+2r+2-a=\dep(Y)+2-a.$$
Hence $\dep(Y) \ge \dep(X)+a-2 \ge \dep(X)+1$ as required.

\begin{proof}[Proof of Claims 1, 2, 3 and 4.]
One can check that the equation $\varphi_k$ of $Z_k \cap
U^3_k $ has $\sigma-wt(\varphi_k)=2d$ and can be written as

$$ \begin{array}{ll} u^2z+y^2+\sum a_{ij}
x^{2i}z^{{\beta}_{ij}(k)}+u\sum b_{ij}
x^{2i+1}z^{{\gamma}_{ij}(k)}+\lambda
yx^{2\alpha-1}z^{{\delta}(k)}, & \text{ if $k$ is odd}\\
u^2+y^2z+\sum a_{ij} x^{2i}z^{{\beta}_{ij}(k)}+u\sum b_{ij}
x^{2i+1}z^{{\gamma}_{ij}(k)}+\lambda
yx^{2\alpha-1}z^{{\delta}(k)}, & \text{ if $k$ is even.}
\end{array}
$$
Where $\beta_{ij}(0)=j$ and $\beta_{ij}(k+1)=\beta_{ij}(k)+i-2d $
so that $$\beta_{ij}(k)=k(i-2d)+j;$$
$ \gamma_{ij}(0)=j$ and
$\gamma_{ij}(k)=\gamma_{ij}(k-1)+\frac{2d+1}{2}+\frac{2i+1}{2}-2d$ if $k$
is odd and $\gamma_{ij}(k)=\gamma_{ij}(k-1)+\frac{2d-1}{2}+\frac{2i+1}{2}-2d$
if $k$ is even,
so that $$\begin{array}{ll} \gamma_{ij}(k)=k \frac{2i+1}{2}-kd+j, &\text{ if $k$ is even} \\
                          \gamma_{ij}(k)=k \frac{2i+1}{2}-kd+j+\frac{1}{2}, &\text{ if $k$ is odd;} \end{array}$$
$\delta(0)=0$ and
$\delta(k)=\delta(k-1)+\frac{2d-1}{2}+\frac{2\alpha-1}{2}-2d$ if $k$ is odd
and $\delta(k)=\delta(k-1)+\frac{2d+1}{2}+\frac{2\alpha-1}{2}-2d$
if $k$ is even, so that
$$\begin{array}{ll} \delta(k)=k \frac{2\alpha-1}{2}-kd, &\text{ if $k$ is even} \\
                          \delta(k)=k \frac{2\alpha-1}{2}-kd-\frac{1}{2}, &\text{ if $k$ is odd.} \end{array}$$

We need to verify that $\sigma -wt(\varphi_k)=2d$ with respect to the weight $\sigma_{\rm odd}=\frac{1}{2}(1,2d+1, 2, 2d-1)$ or $\sigma_{\rm ev}=\frac{1}{2}(1,2d-1,2,2d+1)$ (depending on the parity of $k$).
One can easily see  that $\sigma_{\rm odd}-wt(u^2z)=2d$ and $\sigma_{\rm odd}-wt(y^2) = 2d+1$
(resp. $\sigma_{\rm ev}-wt(u^2)=2d+1$ and $\sigma_{\rm ev}-wt(y^2z) = 2d$ ) if $k$ is odd (resp. even). Moreover, since $x^{4d} \in \varphi$, we have $x^{4d} \in \varphi_k$ for all $k$.
Therefore, $wt(\varphi_k) \le 2d$. One sees that if $\beta_{ij}(k+1) \ge 0$, $\gamma_{ij}(k+1) \ge 0$ and $\delta(k+1) \ge 0$, then $wt(\varphi_k) \ge 2d$.

Therefore it is enough to  show that
$\beta_{ij}(k) \ge 0$, $\gamma_{ij}(k) \ge 0$ and $\delta(k) \ge 0$ for all $1\leq k \leq a$ and all $i$ and $j$ with $a_{i,j}\ne 0$, resp. $b_{i,j}\ne 0$.

Recall that by $(\ddagger )$, $ai+j \ge 2ad$, thus $$\beta_{ij}(k)=k(i-2d)+j=\frac{k}{a}(ai+j)-\frac{kj}{a}-2kd+j \ge j -\frac{kj}{a}=j \frac{a-k}{a} \ge 0.$$
Next, recall that by $(\ddagger )$, $(2i+1)a+2j \ge 2ad-1$, hence $$\gamma_{ij}(k) \ge k \frac{2i+1}{2}-kd+j =\frac{k}{2a}(2i+1)a-kd+j $$
$$\ge \frac{k}{2a}(2ad-1-2j) -kd+j=\frac{2aj-2kj-k}{2a} \ge \frac{-k}{2a} \ge \frac{-1}{2}.$$
Also, recall that  by $(\ddagger )$, $(2\alpha-1)a \ge 2ad+1$, hence $$\delta(k) \ge k \frac{2\alpha-1}{2}-kd-\frac{1}{2} =\frac{k}{2a}(2\alpha-1)a-kd-\frac{1}{2}$$
 $$\ge \frac{k}{2a}(2ad+1) -kd-\frac{1}{2}=\frac{k}{2a}-\frac{1}{2} \ge \frac{-1}{2}.$$
 The above inequalities now follow because $\beta_{ij}(k)$ and $\delta(k)$ are integers.

Note that $\beta_{ij}(a)=\bar{\beta}_{ij},
\gamma_{ij}(a)=\bar{\gamma}_{ij}$ and $\delta(a)=\bar{\delta}$.
Thus $R^3_a \in Z_a$ is indeed isomorphic to $Q_3 \in Y$.

Claims 1 and 2 now follow easily from the equations $\varphi _k$.

Claim 3 also follows by explicit computation.

We now verify that the homogeneous part $\varphi_{k,\sigma -wt=2d}$,
which defines the exceptional divisor  $E_k$ in $\bP(1,2d+1,2,2d-1)$
(resp. $\bP(1,2d-1,2,2d+1)$),  is irreducible. To see this, notice
that as we have seen $x^{4d} \in \varphi$, and so $a_{2d,0}\ne 0$.
Since $\beta _{2d,0}(k)=0$, one sees that $x^{4d} \in \varphi_k$ for
all $k$. If $k$ is odd, we see that $u^2z, x^{4d} \in
\varphi_{k,\sigma_{\rm odd}-wt=2d}$. Hence  $\varphi_{k,\sigma_{\rm
odd}-wt=2d}$ is irreducible. If  $k$ is even, we see that $y^2z,
x^{4d} \in \varphi_{k,\sigma_{\rm ev}-wt=2d}$. Hence
$\varphi_{k,\sigma_{\rm ev}-wt=2d}$ is irreducible. Since
$wt(\varphi)=2d$, we thus have
$$K_{Z_{k+1}}=\phi_k^*K_{Z_k}+(wt(xyzu)-wt(\varphi_k)-1)E_{k+1}=
\phi_k^*K_{Z_k}+\frac{1}{2}E_{k+1}.$$ This completes the proof.
\end{proof}

The other case with discrepancy $\frac{a}{2}$ can be treated similarly. We
may assume that $r+2=(2d+1)a$ for some $d>0$. The local equation of $P \in X$ is given by
$$\left\{ \begin{array}{ll}
\phi_1:&=u^2+yw+p(x^2,z)=0\\
       &=u^2+yw+\sum a_{ij}x^{2i}z^j=0\\
\phi_2:&=yz+x^{2d+1}+xzq(x^2,z)+w=0\\
       &=yz+x^{2d+1}+\sum b_{ij}x^{2i+1}z^{j+1}+w=0\\
\end{array} \right.
$$

The divisorial contraction $Y \to X$ with discrepancy $\frac{a}{2}$ is a
weighted blowup with weight $\sigma_a=\frac{1}{2}(a,r,2,r+2,r+4)$ with
respect to the coordinates $(x,y,z,u,w)$.
Thus the local equation for $Y$ near $Q_3$ is
$$\left\{ \begin{array}{ll}
\phi_{1,a}&=u^2+yw+\sum a_{ij}x^{2i}z^{ai+j-(r+2)}=0\\
\phi_{2,a}&=y+x^{2d+1}+\sum b_{ij}x^{2i+1}z^{j+1+\frac{(2i+1)a-(r+2)}{2}}+wz=0\\
\end{array} \right.
$$

Similarly to the previous case, we construct a sequence of weighted blow-ups $Z_a \to Z_{a-1} \to \ldots \to Z_1 \to Z_0=X$
by weights $\sigma_{\rm ev}=\frac{1}{2}(1,2d-1,2,2d+1,2d+3)$ and $\sigma_{\rm odd}=\frac{1}{2}(1,2d+3,2,2d+1,2d-1)$,
where $Z_{k+1} \to Z_k$ is a weighted blowup with weight $\sigma_{\rm ev}$ (resp. $\sigma_{\rm odd}$) if $k$ is even (resp. odd).

By a direct computation, one sees that
if $k$ is odd, then the local equations are
$$\left\{ \begin{array}{ll}
\phi_{1,k}&=u^2+yw+\sum a_{ij}x^{2i}z^{ki+j-k(2d+1)}=0\\
\phi_{2,k}&=y+x^{2d+1}+\sum b_{ij}x^{2i+1}z^{j+1+\frac{k(2i+1)-k(2d+1)}{2}}+wz=0\\
\end{array} \right.
$$

If $k$ is even, then the local equations are
$$\left\{ \begin{array}{ll}
\phi_{1,k}&=u^2+yw+\sum a_{ij}x^{2i}z^{ki+j-k(2d+1)}=0\\
\phi_{2,k}&=yz+x^{2d+1}+\sum b_{ij}x^{2i+1}z^{j+1+\frac{k(2i+1)-k(2d+1)}{2}}+w=0\\
\end{array} \right.
$$

Notice that  we have $wt(\phi_1)=2d+1$  (resp. $wt(\phi_2)=\frac{2d+1}{2}$) because
$ki+j-k(2d+1) \ge 0$ (resp. $j+1+\frac{k(2i+1)-k(2d+1)}{2} \ge 0$) for all $1\leq k\leq a$ and all $i$ and $j$ such that $a_{i,j}\ne 0$ (resp. $b_{i,j}\ne 0$).

We now compare the singularities. On $Y$, there are cyclic
quotient points $Q_2$ and  $Q_5$ of index $r$ and $r+4$
respectively and a $cD/2$ point $Q_3$. On $Z_k$, there are two
cyclic quotient point $R^+_k, R^-_k$ of index $2d+3, 2d-1$
respectively and a $cD/2$ point $R^3_k$. The point $R^3_a\in Z_a$
has index $2$ and is isomorphic to  $Q_3$ in $Y$. Hence we
conclude that
$$\dep(X) \le a+\dep(R^3_a)+\sum_{k=1}^a (\dep(R^+_k)+\dep(R^-_k))$$
$$=a+\dep(Q_3)+a(2d+2+2d-2)=a+\dep(Q_3)+4ad$$
$$=\dep(Q_3)+2r+4-a=\dep(Y)+2-a.$$
Since $a \ge 3$, we have $\dep(Y) \ge \dep(X)+a-2 \ge \dep(X)+1$.
\end{proof}

\section{Decomposition}
The purpose of this section is to prove the main result \eqref{divSig}.
We begin by proving the following key result.

\begin{thm} \label{key} Let $g:X\supset C\to W\ni R$ be an extremal neighborhood.
If $X$ is not Gorenstein, then there exists a $w$-morphism $f: Y \to X$,
such that $C_Y \cdot K_Y \le 0$, where $C_Y$ denotes the proper
transform of $C$ in $Y$.
\end{thm}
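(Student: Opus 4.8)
The plan is to produce the required $w$-morphism by blowing up a non-Gorenstein point lying on $C$, and to control the intersection number by playing it off against the general elephant. First I would dispose of the trivial configuration: since $X$ is not Gorenstein there is a point of index $r>1$, and if some such point lies off $C$, then the $w$-morphism $f:Y\to X$ extracting it (which exists by \eqref{Ha}) has $E\cap C_Y=\emptyset$, so $K_Y\cdot C_Y=K_X\cdot C<0$ and we are done. Hence I may assume every non-Gorenstein point lies on $C$; pick one, $P\in C$, of index $r$, and let $f:Y\to X$ be a $w$-morphism at $P$, so $K_Y=f^*K_X+\tfrac1r E$. By the projection formula (using $f_*C_Y=C$),
$$K_Y\cdot C_Y=K_X\cdot C+\tfrac1r\,E\cdot C_Y,$$
so everything reduces to the estimate $\tfrac1r\,E\cdot C_Y\le -K_X\cdot C$.

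Next I would bring in the general elephant. Let $G_X\in|-K_X|$ be general; by \eqref{t-ge} it is normal with Du Val singularities, and $P\in G_X$ by \eqref{l-e2}(1). Since $K_X+G_X\sim 0$, \eqref{l-e2}(2) gives $K_Y+G_Y=f^*(K_X+G_X)\sim 0$, that is $G_Y\in|-K_Y|$ and $K_Y\cdot C_Y=-\,G_Y\cdot C_Y$. This splits the argument. If $C\not\subset G_X$, then its strict transform satisfies $C_Y\not\subset G_Y$; as $G_Y$ is effective we get $G_Y\cdot C_Y\ge 0$, hence $K_Y\cdot C_Y\le 0$, and this case is finished with no computation.

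The remaining, and genuinely hard, case is $C\subset G_X$, so that $C_Y\subset G_Y$. Subtracting $K_Y=f^*K_X+\tfrac1r E$ from \eqref{l-e2}(2) gives $G_Y=f^*G_X-\tfrac1r E$, whence $G_Y\cdot C_Y=-K_X\cdot C-\tfrac1r\,E\cdot C_Y$, and I am back to $\tfrac1r\,E\cdot C_Y\le -K_X\cdot C$, now to be read off from surface geometry. By \eqref{l-e2}(3),(4) the induced map $h:=f|_{G_Y}:G_Y\to G_X$ is a nontrivial crepant partial resolution of the Du Val surface $G_X$, dominated by its minimal resolution $\tilde G_X$. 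Writing $E|_{G_Y}=\sum_i m_i\Gamma_i$ over the $h$-exceptional $(-2)$-curves $\Gamma_i$, one has $E\cdot C_Y=\sum_i m_i\,(\Gamma_i\cdot C_Y)$, where each $\Gamma_i\cdot C_Y\ge 0$ records how the strict transform of $C$ meets the $ADE$-configuration over $P$.

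I expect this last estimate to be the main obstacle. Bounding $\sum_i m_i\,(\Gamma_i\cdot C_Y)$ requires the explicit weights of the $w$-morphism from \cite{HaI}, \cite{HaII} (to pin down the multiplicities $m_i$) together with Mori's classification of the extremal neighborhood (to locate $C$ inside the general elephant and thereby compute $\Gamma_i\cdot C_Y$), and it seems to demand a case analysis over the singularity types listed in \eqref{r-basket}. A useful reduction, at least when $P$ is the unique non-Gorenstein point on $C$, is the arithmetic constraint that $rK_X$ is then Cartier along $C$, forcing $-K_X\cdot C\in\tfrac1r\mathbb Z_{>0}$ and hence $-K_X\cdot C\ge\tfrac1r$; in that situation it suffices to show $E\cdot C_Y\le 1$, and I would try to isolate the local contribution of $C$ at $P$ so as to reduce the general bound to this cleaner one.
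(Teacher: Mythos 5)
Your reduction is exactly the paper's: you take a $w$-morphism $f:Y\to X$ over a point $P\in C$ of index $r>1$, write $K_Y\cdot C_Y=K_X\cdot C+\tfrac1r F\cdot C_Y$, and use the general elephant $E_X\in|-K_X|$ (normal, Du Val by \eqref{t-ge}) together with \eqref{l-e2} to dispose of the case $C\not\subset E_X$ via $0=C\cdot(K_X+E_X)=C_Y\cdot(K_Y+E_Y)\geq C_Y\cdot K_Y$. Up to that point you and the paper agree. But the case $C\subset E_X$, which you correctly flag as ``the main obstacle,'' is not an obstacle you then overcome --- it is the entire content of the theorem, and your proposal stops where the paper's proof begins. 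By \cite[2.2]{KM92} and \cite{Mo07} the relevant configurations are exactly IC, IIB, exceptional IA+IA, IA+IA+III and semistable IA+IA, and for each of these the paper chooses a \emph{specific} $w$-morphism (the statement only asserts existence of one good $f$, and the choice matters), writes down Mori's normal form for $C^\sharp\subset X^\sharp$ and Hayakawa's explicit weights, and computes $C_Y\cdot F$ on the affine charts. For instance, in the IA cases Lemma \eqref{l-buIA} produces a weight $\tfrac1r(r_1,1,r_2,r)$ with $C_Y\cdot F=a_1/r_1$, after which one still needs genuinely arithmetic inputs: $a_1=1$ from \cite[2.13.1]{KM92}, the congruence $a_2r_1\equiv 1\ (\mathrm{mod}\ r)$ forcing $sr_1\geq 2$ in the exceptional IA+IA case, and $r_1\delta\geq r'$ (via $r_1\delta\equiv r'\ (\mathrm{mod}\ r)$ and $r'\leq r$) in the semistable case. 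None of this is supplied or replaced by your sketch.

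Two further points where your outline would mislead if pursued literally. First, your closing heuristic --- when $P$ is the unique non-Gorenstein point, $-K_X\cdot C\geq 1/r$ so it suffices to show $F\cdot C_Y\leq 1$ --- covers at best the IC and IIB cases; in the IA+IA cases there are two points of index $>1$ and $-K_X\cdot C$ can be as small as $\delta/(rr')$ (semistable case) or $s/(2r)$ (exceptional case), so the required bound on $F\cdot C_Y$ is much finer than $1$ and is exactly what the congruence arguments above deliver. Second, your proposed surface-theoretic bound on $F|_{G_Y}\cdot C_Y=\sum_i m_i(\Gamma_i\cdot C_Y)$ must contend with the fact that $F|_{G_Y}$ is only a $\mathbb Q$-divisor and that locating $C_Y$ against the exceptional configuration again requires Mori's normal forms --- so this route does not avoid the case analysis, it repackages it. As written, the proposal is an accurate plan of attack with the decisive step missing.
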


\begin{proof}
We use the classification of extremal neighborhoods. Each case is
elementary.

Let $P\in C\subset X$ be an isolated terminal singularity of index $r>1$.
By \eqref{Ha}, there is a weighted blowup $f: Y \to
X$ with exceptional divisor $F$
such that $K_Y=f^*K_X+\frac{1}{r} F$. We will need to compute
\begin{equation}\label{e-1}C_Y \cdot K_Y= C_Y \cdot (f^*K_X+ \frac{1}{r}F)=C\cdot K_X+ \frac{1}{r} C_Y \cdot F.\end{equation}

We follow the classification summarized in Appendix A and B of
\cite{Mo88} together with the classification of the terminal singular point
$P$ cf. \cite[pg. 541]{KM92}, and we will apply the results of \cite{HaI} and \cite{HaII}.

Suppose that $C\not\subset E_X$ where $E_X$ is a general section of $|-K_X|$.
By \eqref{t-ge}, $E_X$ is normal with Du Val singularities.
By \eqref{l-e2}, we have that
$f^*(K_X+E_X)=K_Y+E_Y$ where $E_Y=f^{-1}_*E_X$.
Therefore $$0=C\cdot (K_X+E_X)=C_Y\cdot (K_Y+E_Y)\geq C_Y\cdot K_Y.$$
It suffices therefore to consider those extremal neighborhoods with $C\subset E_X$. By \cite[2.2]{KM92} and \cite{Mo07}, the only cases to consider are IC, IIB, exceptional IA + IA, IA + IA + III and semistable IA + IA.

{\bf IC.} By \cite[pg. 243]{Mo88}, we have that $$C^\sharp = \{y_1^{r-2}-y_2^2=y_4=0\}\subset $$
$$\qquad \qquad X^\sharp =\{y_1,y_2,y_4 \}/\mathbb Z _r (2,r-2,1)$$
where $r\geq 5$ is odd.
Following \cite[3.9]{HaI}, if we blow up $\mathbb C ^3/\mathbb Z _r (2,r-2,1)$ with weight $\frac 1 r (2,r-2,1)$, we obtain a divisorial extraction with discrepancy $1/r$.

Computing on $U_1$ yields $C_Y = \{1-\bar y _2^2=\bar y _4=0 \}$ and $F=\{\bar y _1=0 \}$
where $\mathbb Z _2$ acts with weights $(r,-r+2,-1)$. Since $r$ is odd,
on $U_1$ we have
$F\cap C_Y=Q$
is a point with multiplicity $1$.

Computing on $U_2$ yields $C _Y = \{\bar y _1^{r-2}-1=\bar y _4=0 \}$ and $F=\{\bar y _2=0 \}$
where $\mathbb Z _{r-2}$ acts with weights $(-2,r,-1)$.
So  on $U_2$ we have
$F\cap C_Y=Q$ where $Q\in U_1\cap U_2$ is the same point as above.

Computing on $U_3$ yields no further intersection points.
Therefore, $C_Y\cdot F=1$ and hence $K_Y\cdot C_Y\leq 0$.

{\bf IIB.} We have $cAx/4$ point $P \in C$.
By \cite[pg. 243]{Mo88}, we have $$C^\sharp = \{y_1^{2}-y_2^3=y_3=y_4=0\}\subset $$
$$\qquad X^\sharp =\{ y_1^{2}-y_2^3+g_3y_3+g_4y_4=0 \}$$ where
$\mathbb Z _4$ acts on $\mathbb C^4$
with weights
$(3,2,1,1)$. By \cite[\S 7]{HaI} there is a $w$-morphism $f:Y\supset F\to X\ni P$ which
is a weighted blowup
 with equivalent weights $\frac{1}{4}(r_1,r_2,r_3,r_4)$ such that $r_1 \equiv 3, r_2 \equiv 2, r_3 \equiv r_4 \equiv 1 \ \  (\text{mod} \ 4)$.

We distinguish the following cases:\\
 If $2r_1 > 3r_2$, then computation on $U_1$ yields that
$C_Y =\{
\bar y_1^{\frac{2r_1-3r_2}{4}}-\bar{y}_2^{3}=\bar{y}_3=\bar{y}_4=0\}$,
$F$ is defined by $\bar{y}_1=0$ where $\mathbb Z _{r_1}$ acts with weights $(4,-r_2,-r_3,-r_4)$.
Moreover, computation on $U_2$, $U_3$ and $U_4$ yields no further points in $C_Y \cap F $.
Thus $C_Y \cdot F = \frac{3}{r_1}  \le 1$.

If $2r_1<3r_2$, then similar computation yields that $C_Y \cdot F = \frac{2}{r_2}  \le 1$.

Finally, if $2r_1=3r_2$, then a similar computation
that $C_Y \cdot F = \frac{3}{r_1}=\frac{2}{r_2}$.

In conclusion we have that
$$C_Y \cdot F = \min\{ \frac{3}{r_1},\frac{2}{r_2}\} \le 1, $$
and so $C_Y\cdot K_Y\leq 0.$

Before we turn our attention to the remaining three cases, it is necessary to prove the following.
\begin{lem}
  [Blowing up $cA/r$ points on an extremal neighborhood of type $IA$]\label{l-buIA} Let $P\in C\subset X$ be a point of type $cA/r$ on an extremal neighborhood of type $IA$, then by \cite[pg. 243]{Mo88},
we have $$C^\sharp =\{ y_1^{a_2}-y_2^{a_1}=y_3=y_4=0\} \subset$$ $$\qquad X^\sharp =\{ \varphi= g_2
(y_1^{a_2}-y_2^{a_1})+g_3y_3+g_4y_4=0\}$$ where $\mathbb Z_r $ acts on
$\mathbb  C^4$ with weights $(a_1,a_2,-a_1,0)$ and $(a_1a_2,r)=1$.

Then there is a $w$-morphism $Y\supset F\to X\ni P$
of discrepancy $1/r$ given by a weight $\sigma = \frac 1 r (r_1,1,r_2,r)$
such that $C_Y\cdot F =a_1/r_1$ where $C_Y$ is the strict transform of $C$
and $r_1\equiv -r_2\equiv a_ 1a_2^{-1}$ modulo $r$.
\end{lem}
\begin{proof}
Let  $r>\lambda>0$ be an integer such that $a_2 \lambda =qr+1$ for some integer $q\geq 0$
and $\alpha$ be the residue of $a_1 \lambda$ modulo $r$.
By \cite[Lemma 6.5]{HaI},
there is a $w$-morphism $f:Y\supset F\to X \ni P$ of discrepancy $1/r$ given by a weight $\sigma = \frac 1 r (r_1,1,r_2,r)$ where
$r_1$ and $r_2$ are positive integers such that $r_1\equiv -r_2\equiv \alpha$ modulo $r$.

The weighted blow up is
covered by four affine pieces $U_1,...,U_4$ whose equations are described in \cite[\S 6]{HaI}. An easy computation shows that $F \cap C_Y = \emptyset $ on $U_3$ and
$U_4$.
 Since $r_1
\equiv a_1 \lambda \  (\text{mod} \  r)$, we may write $r_1=a_1 \lambda+tr$ for some $t$ . Hence
\begin{equation}\label{e-3} a_2r_1=a_1a_2 \lambda+a_2tr=(a_1q+a_2t)r+a_1.\end{equation}
Note that as $a_2r_1 >0$ and $r> a_1>0$, we have that $a_1q+a_2t \ge 0$.
Computation in $U_2$ now yields that
$C_Y =\{\bar{y}_1^{a_2}\bar{y}_2^{a_1q+a_2t}-1=\bar{y}_3=\bar{y}_4=0\}$,
$F =\{\bar{y}_2=0\}$.
Therefore $C_Y \cap F = \emptyset$ in $U_2$.

Computation in $U_1$ yields that
$C_Y =\{ \bar{y}_1^{a_1q+a_2t}-\bar{y}_2^{a_1}=\bar{y}_3=\bar{y}_4=0\}$,
$F =\{\bar{y}_1=0\}$ where $\mathbb Z _{r_1}$ acts with weights $(r,-1,-r_2,-r)$.
Therefore $C_Y \cdot F = a_1/r_1$.
\end{proof}

{\bf Exceptional IA + IA.} If $g$ is a flipping contraction (resp. a
divisorial contraction), then we have two IA points $P$ and $Q$ such that
$Q$ is a $cA/2$ (resp. $cA/2$ or $cAx/2$ or $cD/2$) point  and
$P$ is a
$cA/r$ point of odd index $r\ge 5$ (resp. $r\geq 3$).
Since (cf. \S \ref{ss-en})  $$C\cdot K_X=-1+\frac{1}{2}+\frac{r-a_2}{r}= \frac{1}{2}-\frac{a_2}{r}<0$$ we have $a_2
> \frac{r}{2}$.  Let $s=2a_2-r>0$. Then $C\cdot K_X={-s}/{2r}$.

By \cite[2.13.1]{KM92}, we have $a_1=1$. We consider the $w$-morphism $f:Y\supset F\to X\ni P$ over
the point $P$ given by \eqref{l-buIA}. We thus have $C_Y \cdot F =
\frac{1}{r_1}$ and hence $$K_Y\cdot C_Y=\frac {-s}{2r}+ \frac 1 {rr_1}=\frac 1 r (-\frac s 2+\frac 1 {r_1}).$$

Notice that $a_2r_1\equiv a_1=1$ modulo $r$ and so
$a_2r_1=1+qr$ for some integer $q\geq 0$ and hence $sr_1=2+(2q-r_1) r$. Since $sr_1> 0$ and $r>1$,
it follows that $sr_1 \ge 2$ and hence
${1}/{r_1} - {s}/{2}\le 0$. Therefore, we conclude that $C_Y \cdot K_Y \le 0$.


{\bf Semistable IA + IA.} Let $C$ be the flipping curve. Then $C$ contains two IA points $P$ and $P'$ corresponding to singularities of type $cA/r$ and $cA/r'$ respectively. We assume that $r\geq r'$.
By \cite[\S 13, pg. 679]{KM92},
the curve $C^\sharp$ is given by the $x$-axis, and $X^\sharp$ is given by $xy-z^{dr}+tf(x,y,z,t)=0\subset \mathbb C ^4$ with weights $\frac{1}{r}(1,-1,a,0)$.
It follows that $a_1=1$ and $a_2=a$.
Similarly for the second singular point we have $a'_1=1$ and $a'_2=a'$.

Let $\frac{a}{r}+\frac{a'}{r'}=1+\frac{\delta}{rr'}$, then $\delta >0$ and $K_X \cdot C=-\frac{\delta}{rr'}$. We write
\begin{equation}\label{e-s1} ar'+a'r=rr'+\delta.\end{equation}
Let $f:Y\supset F\to X \ni P$ be the weighted
blow up of $X$ at $P$ with weight $\sigma = \frac 1 r (r_1,1,r_2,r)$ as described in \eqref{l-buIA}. In particular $C_Y\cdot F=1/r_1$.
Recall that since $r_1\equiv a^{-1}$ $({\rm mod}\ r)$, we have that
\begin{equation}\label{e-s2} ar_1=\gamma r+1\ {\rm for\ some}\ \gamma \geq 0.\end{equation}

Suppose that $\gamma =0$, then $ar_1=1$. Hence $a=r_1=1$. Then $$\frac{r'-1}{r'} \ge \frac{a'}{r'} =1+\frac {\delta }{rr'}-\frac 1 r> \frac{r-1}{r},$$ so that $r'>r$ which is a contradiction.

Therefore, we have $\gamma >0$.  By \eqref{e-s2} and \eqref{e-s1}, we have
$$ ar_1 \delta \equiv \delta  \equiv ar' \ ({\rm mod}\ r).$$ Hence $ r_1 \delta \equiv r'\ ({\rm mod}\ r)$. Since $r' \leq r$,
we thus have $r_1 \delta  \ge r'$. It follows that $$C_Y \cdot K_Y=-\frac{\delta}{rr'}+ \frac{1}{rr_1} \le -\frac{\delta}{rr'}+ \frac{\delta}{rr'}=0.$$

{\bf IA + IA + III.} In this case $C\subset X$ is divisorial and the two IA points are both ordinary of $cA$ type of index $2$ and $r\geq 3$.
By \cite[2.12.1]{KM92}, we also have that $a_1=1$ at the $cA/r$ point.
The same computation as in the Exceptional IA+IA case
shows that $C_Y\cdot K_Y\leq 0$.
\end{proof}

\begin{thm}\label{decomp} Let $g: X\supset C \to W\ni P$ be an extremal neighborhood which is isolated (resp.
divisorial). If $X$ is not Gorenstein, then we have a diagram
\begin{diagram}
Y     &   & \rDashto &    &  Y '  \\
\dTo^{f} &       &      &   &  \dTo_{f'}      \\
 X       &        &      &  &  X'  \\
           & \rdTo^{g}  &      & \ldTo^{g'}  &        \\
           &        &  W   &        &
\end{diagram}
where $ Y \dashrightarrow Y'$ consists of flips and flops over $W$, $f$ is
a $w$-morphism, $f'$ is a divisorial contraction (resp.
a divisorial contraction
to a curve) and $g':X'\cong X^+\to W$ is the flip of $g$ (resp.
$g'$ is divisorial contraction to a point).
\end{thm}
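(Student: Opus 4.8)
The plan is to start from the extremal neighborhood $g\colon X\supset C\to W\ni P$ and apply Theorem \ref{key} to produce a $w$-morphism $f\colon Y\to X$ with $C_Y\cdot K_Y\le 0$, where $C_Y$ is the proper transform of $C$. This is the crucial input: it tells us that on $Y$ the curve $C_Y$ is either $K_Y$-negative or $K_Y$-trivial. The first step is therefore to run a relative minimal model program for $K_Y$ over $W$. Since $\rho(Y/W)=2$ (the $w$-morphism has added exactly one exceptional divisor $F$ over the single contracted curve of $g$), the cone $\overline{NE}(Y/W)$ is two-dimensional, spanned by the classes of $F$-contracting curves and by the class of $C_Y$. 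I would extract the extremal rays of this cone and contract/flip the ray \emph{other} than the one corresponding to $f$, producing a sequence of $K_Y$-flips and $K_Y$-flops over $W$ (each with irreducible center by Theorem \ref{t-flip}) leading to a model $Y'$.

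\textbf{Identifying the terminal contraction on $Y'$.}
The second step is to analyze the endpoint $Y'$ of this MMP and the induced morphism $Y'\to W$. Because $\rho(Y'/W)=2$ throughout, after running out the interior ray there remains a single $K_{Y'}$-negative (or trivial) extremal ray over $W$; contracting it gives $f'\colon Y'\to X'$ with $\rho(X'/W)=1$, and then $g'\colon X'\to W$. I expect the key numerical check to be that $f'$ is genuinely a divisorial contraction (so that its exceptional locus is the proper transform of $F$, a divisor) rather than a small contraction, and that $X'$ is again terminal so that $g'$ is either the flip $X^+$ of $g$ in the isolated case, or a divisorial contraction to the point $P$ in the divisorial case. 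The sign condition $C_Y\cdot K_Y\le 0$ from Theorem \ref{key} is exactly what guarantees that the interior ray we flip/flop is $K$-negative or $K$-trivial, so that $F$ survives as the contractible divisor for $f'$; the identification $X'\cong X^+$ then follows from uniqueness of the flip (Theorem \ref{t-flip}) together with $\rho(X'/W)=1$ and $K_{X'}$ being $g'$-ample.

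\textbf{Main obstacle and case split.}
The hard part will be verifying, case by case against the classification of \cite{Mo88} and \cite{KM92}, that the interior MMP over $W$ consists only of flips and flops (never a further divisorial contraction that would destroy the Picard-rank bookkeeping) and that $f'$ contracts precisely the transform of $F$ to the expected center. In the isolated case I must confirm $g'$ is the flip and $f'$ is a divisorial contraction; in the divisorial case I must confirm $f'$ is a divisorial contraction to a curve and $g'$ is a divisorial contraction to the point $P$, using that $Y$ is terminal and applying Theorem \ref{t-ge} (the general elephant) to control the singularities appearing on the intermediate models. The whole argument hinges on the two-dimensionality of the relative cone making the MMP essentially forced: once $f$ occupies one boundary ray, everything else over $W$ is determined, so the content is checking terminality and divisoriality are preserved at each step rather than constructing the maps.
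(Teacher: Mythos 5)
Your overall strategy (apply \eqref{key}, exploit $\rho(Y/W)=2$, run a relative MMP over $W$, and identify the output) is the same as the paper's, but there is a genuine gap at the very first and most delicate step: what to do when $C_Y\cdot K_Y=0$. You propose to ``contract/flip the ray other than the one corresponding to $f$'' in a $K_Y$-MMP, but a $K_Y$-MMP over $W$ will never select a $K_Y$-trivial ray; it would instead contract the $K_Y$-negative ray $\mathbb R_+l$ spanned by curves in $F$, i.e.\ simply undo $f$ and return to $X$. Moreover, even granting that you want to ``flop'' $C_Y$, a flop requires a polarizing divisor to specify its direction and to guarantee existence and termination. The paper supplies exactly this missing ingredient: by \cite[Lemma 3.15]{Mo82} every $g$-exceptional curve is not nef, so there is an effective irreducible divisor $D$ with $D\cdot C<0$ ($D$ is the contracted divisor in the divisorial case, and a member of a moving family in the isolated case); writing $f^*D=D_Y+\lambda F$ with $\lambda>0$ one gets $C_Y\cdot(K_Y+\epsilon D_Y)<0$ strictly, so one performs the $(K_Y+\epsilon D_Y)$-flip of $C_Y$, which is a $K_Y$-flip or $K_Y$-flop. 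Without the divisor $D$ your MMP is not forced in the direction you need.

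Two further points that you flag as ``checks'' are actually the substantive content of the second half of the proof and would need arguments. First, to see that the $K$-MMP on $Y^+$ terminates in a divisorial contraction (rather than with $K$ nef over $W$), the paper observes that a general curve $l\subset F$ misses $C_Y$, so $l_+\cdot K_{Y^+}=l\cdot K_Y<0$ and these curves dominate the transform of $F$. Second, the identifications at the end are not automatic: in the divisorial case one must rule out that $f'$ contracts $F_{Y'}$ (the paper does this by showing that would force $X'\cong X$ and $Y'\cong Y$ via \cite[Lemma 3.4]{Kk01}, a contradiction), and in the isolated case one must rule out that $K_{X'}$ is $g'$-trivial (which would make $K_W$ $\mathbb Q$-Cartier) or $g'$-negative (which would give $X'\cong X$ and again $Y'\cong Y$, contradicting that $Y\dasharrow Y'$ is a nontrivial composition of $(K_Y+\epsilon D_Y)$-flips). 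Also note that in the divisorial case it is $D_{Y'}$, not $F_{Y'}$, that $f'$ contracts (to a curve); $F$ survives to $X'$ and is contracted to a point by $g'$, so the sentence ``$F$ survives as the contractible divisor for $f'$'' is only correct in the isolated case. Finally, the general elephant \eqref{t-ge} plays no role in this part of the argument.
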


\begin{proof}

{\bf  1. } By \cite[Lemma 3.15]{Mo82}, we have that any $g$-exceptional curve
$C'$ is not
numerically effective. In particular, there is  an effective
irreducible divisor $D \in {\rm Div}(X)$ such that $D \cdot C' <0$.

In the case of a divisorial contraction, this divisor $D$ is nothing
but the exceptional divisor to be contracted. In the case of small
contraction, the divisor $D$ moves in a family.

{\bf  2.} Let $f:Y \to X$ be the $w$-morphism  with  exceptional
divisor $F$ given by \eqref{key}. Let $l$ be a curve on $F$.  Let $C_Y$ and $D_Y$ be the
proper transforms of $C$ and $D$ respectively.

Since $\rho(Y/W)=2$ and ${\rm Exc}(g\circ f)= F\cup C_Y$,
$${\rm NE}(Y/W)= \bR_+ l + \bR_+ C_Y.$$

{\bf  3.} One can flip or flop along $C_Y$.\\
By $2$, $C_Y$ is extremal. Moreover, $f^*D=D_Y+\lambda F$ where $\lambda >0$
and hence
$$C_Y \cdot (K_Y+\epsilon D_Y) \le C_Y \cdot \epsilon D_Y=\epsilon
C_Y \cdot (f^*D-\lambda F) <0.$$

 Thus, we have a
$(K_Y+\epsilon D_Y)$-flip $Y \dashrightarrow Y^+$, which is a $K_Y$-flip
or $K_Y$-flop.

{\bf 4.} Notice that $K_{Y^+}$ is not nef over $W$.\\ More
precisely, let $l \subset F$ be a general curve, then $l \cap
C_Y=\emptyset$. Hence the proper transform  $l_+$ of $l$ in $Y^+$
satisfies $l_+ \cdot K_{Y^+}=l \cdot K_Y <0$. Moreover, the family of such curves dominates the divisor $F_{Y^+}$.

We now run the $K_{Y^+}$ minimal model program over $W$.
By termination of flips,
we must end up with a divisorial contraction $Y' \to X'$.
Therefore,  we have the required diagram.

{\bf  5.} Suppose that $g: X \to W$ is divisorial with
exceptional divisor $D$.
Notice that $${\rm Exc}(Y'/W) \supset D_{Y'}
\cup F_{Y'}$$ where $D_{Y'}$ and $F_{Y'}$ are the strict transforms of $F$ and $D$ on $Y$.

We claim that $f'$  contracts the divisor $D_{Y'}$
to a curve.

Suppose to the contrary that $Y' \to X' $ contracts
$F_{Y'}$. Then we see that $X' \to W$ has relative
Picard number $\rho(X'/W)=1$ containing an exceptional divisor
$D_{X'}$. Notice furthermore that $D$ and $D_{X'}$
are isomorphic away from finitely many curves. Therefore, we have
$X' \cong X$ by an easy argument cf.
\cite[Lemma 3.4]{Kk01}. By the same argument, we also have $Y
\cong Y'$, which is a contradiction.

Hence $f': Y' \to X'$ contracts $D_{Y'}$.
Notice that $g(D)$ is a curve in $W$. Hence $g'(
f'(D_{Y'}))$ is a curve. In particular,
$f'(D_{Y'})$ is a curve.

Since $g(f(F))$ is a point. It follows that
$g'(F_{X'})$ is a point.

{\bf 6.} Suppose  that $g: X \to W$ is isolated. We claim that
$X' \cong X^+$ is the flip.

First of all, $X$ and $X'$ are isomorphic  in codimension $1$ and the relative Picard number $\rho(X'/W)$ is equal to $1$.

If $K_{X'}$ is $g'$-ample over $W$, we are
done.

If $K_{X'}$ is $g'$-trivial, we have that $K_{X'}=g'^*K_W$. This implies that  $K_W$ is $\mathbb{Q}$-Cartier, which is impossible.

If $K_{X'}$ is $g'$-negative, we see that $$ X \cong \textrm{Proj} _W\left( \bigoplus_{m\ge 0}\mathcal  O_W(-mK_W)\right) \cong X'.$$
It is clear that  $F_Y$ and $F_{Y'}$ correspond to the same valuation in the function field. We thus have
$Y \cong Y'$ by  \cite[Lemma 3.4]{Kk01}, which is absurd because $Y \dasharrow Y'$ is a composition of a sequence of $(K_Y+\epsilon D_Y)$-flips.
\end{proof}

\begin{rem} \label{fl}
We would like to remark that if $g: X \to W$ is isolated, then
either $f': Y' \to X'$ is a divisorial contraction to a curve or
at least one of $Y_i \dasharrow Y_{i+1}$ is a flip. To see this, pick any
flipped curve $C_{X'}$ such that $K_{X'} \cdot C_{X'} >0$. Suppose
that $f'$ is a divisorial contraction to a point, then $C_{X'}\not\subset f'(F_{Y'})$. Hence $$K_{Y'}  \cdot {C_{Y'}}=(f'^*K_{X'}+a
F_{Y'})\cdot {C_{Y'}}= K_{X'} \cdot C_{X'}+ a{C_{Y'}} \cdot
F_{Y'}
>0,$$ where ${C_{Y'}}$ denotes the proper transform of $C_{X'}$
in $Y'$ and $a >0$. If there are no flips $Y_i
 \dasharrow  Y_{i+1}$, then $Y\dasharrow Y'$ is a flop and if $C_Y$ is the
 proper transform of $C_{Y'}$ in $Y$, then $K_Y\cdot C_Y=K_{Y'}\cdot C_{Y'}>0$.
Therefore, as $f:Y\to X$ is a divisorial contraction, $C_Y$ maps to a curve
$C_X\subset X$ which is the flipping curve. But 
then $C_Y$ is (by construction) the flopping curve of $Y\dasharrow Y'$ and so $C_Y\cdot K_Y=0$. This is the required contradiction.
\end{rem}

\begin{rem}\label{r-flip} The result of \eqref{decomp} also holds if
we assume that $g:X\supset C \to W\ni P$ is a flipping contraction
or a divisorial contraction to a curve.
In fact, by \eqref{t-flip} and \eqref{r-div}, $g$ factors through an extremal neighborhood $g':X\supset C '\to W'\ni P'$. Therefore, \eqref{key} also applies in this context so that we have a $w$-morphism $f:Y\to X$ such that $K_Y\cdot C_Y\leq 0$.
The rest of the argument is the same as the proof of \eqref{t-flip}.
\end{rem}

\begin{rem}
In order to prove \eqref{t-ge}, we will need an invariant which is minimal
for Gorenstein varieties and that improves
under $w$-morphisms and flips. There seem to be two possible approaches,
the first one using general elephants $E_X\in |-K_X|$ (i.e. assuming that there exists an element $E_X\in |-K_X|$ with Du Val singularities) and the second one using $\dep(X)$.

It is expected that if $g:X\to W$ is the germ of a flipping or divisorial contraction, then the general elements $E_X\in |-K_X|$ and $E_W\in |-K_W|$ have Du Val singularities (this is known in the divisorial to point case by \cite{Kk05}, in the extremal neighborhood case by \eqref{t-ge} and is expected to hold unconditionally).
If we assume that $E_X$ is Du Val, then there is a simpler proof for \eqref{divSig} not involving the invariants $\dep(X)$ which we now explain.
\end{rem}
\begin{proof}[Proof of \eqref{divSig} assuming that $E_X$ is Du Val.]
We may work locally over $W$ and so we may assume that
there is a point $Q\in W$ such that $W-Q$ is smooth and $X-f^{-1}(Q)$ is smooth.
Let $E_X$ be as above and let $\tilde E_{X}$ be the minimal resolution of $E_{X}$ then $\rho (\tilde E_{X}/E_{X})$ defines an invariant of (the germ of) $g:X\to W$.
We will proceed by induction on $\rho (\tilde E_{X}/E_{X})$.

If $\rho (\tilde E_{X}/E_{X})=0$, then $E_{X}$ is smooth
and hence terminal. By inversion of adjunction $(X,E_{X})$ is terminal.
By classification of terminal $3$-fold singularities, $X$ is Gorenstein. By \eqref{t-Cu},
$g$ is divisorial and it is the blow up of a LCI curve
in $W$.

We may therefore assume that $\rho(\tilde E _X/E_X)>0$.
By \eqref{r-flip}, we may factor $X\to W$ by rational maps over $W$
$$X\stackrel{f}{\leftarrow} Y \dasharrow Y' \stackrel{f'}{\to} X'.$$

Let $f:F\subset Y\to P\in X$ be the $f$-exceptional divisor.
By \eqref{l-e2}, $f^*(K_{X}+E_{X})=K_{Y}+E_{Y}$ where  $E_{Y}=f^{-1}_*E_X$.
By \eqref{l-e2}, $\tilde E_{X}\to E_{X}$ factors through $E_{Y}$.
It follows by \eqref{l-e1} that $\rho (\tilde E_Y/E_Y)<\rho (\tilde E_X/E_X)$.
The rational map
$Y\dasharrow Y'$ is given by a sequence of flips and flops $Y_i\dasharrow Y_{i+1}$.
\begin{claim} If $Y_i\dasharrow Y_{i+1}$ is a flip (resp. a flop), then
$\rho (\tilde E_{Y_i}/E_{Y_i})\geq\rho (\tilde E_{Y_{i+1}}/E_{Y_{i+1}})$ where $E_{Y_i}$ denotes the strict transform of $E_X$ and $\tilde E_{Y_i}$ denotes its minimal resolution.\end{claim}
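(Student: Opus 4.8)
The plan is to compare both $E_{Y_i}$ and $E_{Y_{i+1}}$ with the surface lying over the common base of the flip (resp. flop), exploiting throughout that $E_{Y_i}$ is anticanonical. First I would record the numerical input: since $K_X+E_X\sim 0$ is preserved by the $w$-morphism (by \eqref{l-e2}, $f^*(K_X+E_X)=K_Y+E_Y$, so $E_Y\in|-K_Y|$) and by the subsequent maps, which are isomorphisms in codimension $1$, the strict transform satisfies $E_{Y_i}\in|-K_{Y_i}|$. Hence for the (irreducible, by \eqref{t-flip}) flipping/flopping curve $C_i$ one has $E_{Y_i}\cdot C_i=-K_{Y_i}\cdot C_i$, and likewise $E_{Y_{i+1}}\cdot C_i^+=-K_{Y_{i+1}}\cdot C_i^+$ for the transformed curve $C_i^+$.

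Next I would factor the step as $Y_i\stackrel{p_i}{\to}Z_i\stackrel{q_i}{\leftarrow}Y_{i+1}$, where $p_i,q_i$ are the small contractions of $C_i,C_i^+$, and set $E_{Z_i}:=(p_i)_*E_{Y_i}$. As $K_{Y_i}+E_{Y_i}\sim 0$ is $p_i$-trivial and $p_i$ is small, one gets $p_i^*(K_{Z_i}+E_{Z_i})=K_{Y_i}+E_{Y_i}$, so $K_{Z_i}+E_{Z_i}\sim 0$ is $\mathbb Q$-Cartier and $p_i|_{E_{Y_i}}\colon E_{Y_i}\to E_{Z_i}$ is a crepant birational morphism; the same holds for $q_i|_{E_{Y_{i+1}}}$. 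Thus $E_{Z_i}$ is again Du Val, the three surfaces share one minimal resolution $\tilde E_{Z_i}=\tilde E_{Y_i}=\tilde E_{Y_{i+1}}$, and the two maps to $E_{Z_i}$ extract only $(-2)$-curves. Writing $n_i$ (resp. $n_{i+1}$) for the number of such exceptional curves of $p_i|_E$ (resp. $q_i|_E$), one has $\rho(\tilde E_{Y_i}/E_{Y_i})=\rho(\tilde E_{Z_i}/E_{Z_i})-n_i$ and similarly for $i+1$, so the Claim is equivalent to $n_i\le n_{i+1}$.

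To finish in the flip case ($K_{Y_i}\cdot C_i<0$, $K_{Y_{i+1}}\cdot C_i^+>0$) I would note that $E_{Y_{i+1}}\cdot C_i^+=-K_{Y_{i+1}}\cdot C_i^+<0$ forces the irreducible curve $C_i^+$ to be a component of $E_{Y_{i+1}}$; hence $q_i|_E$ contracts $C_i^+$ and $n_{i+1}=1$, while $n_i\in\{0,1\}$ depending on whether $C_i\subset E_{Y_i}$, giving $n_i\le 1=n_{i+1}$. In the flop case $E_{Y_i}\cdot C_i=0$, so either $C_i\cap E_{Y_i}=\emptyset$—and then the flop is disjoint from $E_{Y_i}$, whence $E_{Y_i}\cong E_{Y_{i+1}}$ and $n_i=n_{i+1}=0$—or $C_i\subset E_{Y_i}$, in which case I would invoke Koll\'ar's theorem \cite{Kol89} that a flop preserves the analytic type of the singularities, hence of the Du Val elephant, to conclude $E_{Y_i}\cong E_{Y_{i+1}}$ and $n_i=n_{i+1}$.

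The main obstacle is exactly this last point. For a flip the sign of $K_{Y_{i+1}}\cdot C_i^+$ forces $C_i^+$ into $E_{Y_{i+1}}$, which drives the inequality; but a flop is symmetric, and a priori the asymmetric configuration $C_i\subset E_{Y_i}$, $C_i^+\not\subset E_{Y_{i+1}}$ would give $n_i=1>0=n_{i+1}$ and reverse the desired inequality. Excluding this is precisely where one needs that $E$ is (the strict transform of) a \emph{general} elephant together with Koll\'ar's analysis of flops; the generality of $E_X$ is inherited by $E_{Y_i}$ since a general anticanonical section pulls back to a general one under the crepant modifications in play (cf. \eqref{l-e1}).
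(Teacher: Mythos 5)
Your overall strategy is the same as the paper's: reduce via \eqref{t-flip} to an irreducible flipping/flopping curve, pass to the common base $Z_i$ of the two small contractions, use that $E_{Y_i}\in|-K_{Y_i}|$ to see that everything is crepant over the Du Val surface $E_{Z_i}$, and then compare the two sides by counting which of the curves $C_i$, $C_i^+$ lie on the respective elephants. Your flip case is exactly the paper's argument: $E_{Y_{i+1}}\cdot C_i^+=-K_{Y_{i+1}}\cdot C_i^+<0$ forces $C_i^+\subset E_{Y_{i+1}}$, so $n_{i+1}=1\ge n_i$.

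The one genuine problem is the last step of your flop case. Koll\'ar's theorem in \cite{Kol89} says that $Y_i$ and $Y_{i+1}$ have analytically isomorphic singularities (and, in its proof, that \emph{general} anticanonical members correspond); it says nothing about the strict transform of a \emph{fixed} divisor, and here $E_{Y_i}$ is the strict transform of the chosen $E_X$, not a general member of $|-K_{Y_i}|$ -- indeed in the problematic sub-case it \emph{contains} the flopping curve, so it is certainly not general near it. Your closing sentence, that ``generality of $E_X$ is inherited by $E_{Y_i}$,'' is therefore not available: \eqref{l-e1} compares a given Du Val member with a general one, it does not make the strict transform general. Fortunately the asymmetric configuration you are worried about ($C_i\subset E_{Y_i}$ but $C_i^+\not\subset E_{Y_{i+1}}$) is excluded by a one-line argument, which is what the paper uses: if $C_i\subset E_{Y_i}$ then $E_{Z_i}=(p_i)_*E_{Y_i}$ passes through the flopped point $q_i(C_i^+)$, so its strict transform $E_{Y_{i+1}}$ must meet the fibre $C_i^+$; since $E_{Y_{i+1}}\cdot C_i^+=-K_{Y_{i+1}}\cdot C_i^+=0$ and $C_i^+$ is irreducible, this forces $C_i^+\subset E_{Y_{i+1}}$, whence $n_{i+1}=1=n_i$. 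Note also that you only ever need the inequality $n_{i+1}\ge n_i$, not the isomorphism $E_{Y_i}\cong E_{Y_{i+1}}$, so Koll\'ar's theorem is both unjustified and unnecessary here. With that substitution your proof coincides with the paper's.
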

\begin{proof}[Proof of the Claim.]
By \eqref{t-flip}, we may assume that the flip $Y_i\dasharrow Y_{i+1}$
factors in the analytic category, through a sequence of flips
with irreducible flipping curve.
Let $\bar Y _i\to \bar W_i\leftarrow \bar Y _i^+$ be one
of these flips. It suffices to show that $\rho (\tilde E_{\bar Y _i}/E_{\bar Y _i})\geq
\rho (\tilde E_{\bar Y ^+_i}/E_{\bar Y ^+_i})$.
Since $\bar Y _i\dasharrow \bar Y _i^+$ is an isomorphism on the complement
of the flipping curve $\bar C _i$, we may assume that $\bar Y _i\supset
\bar C _i\to \bar W_i\ni \bar w _i$ is an extremal neighborhood.
Since $\bar Y _i\dasharrow \bar Y _i^+$ is a $(K_{\bar Y _i}+E_{\bar Y _i})$-flop,
we have that ${E_{\bar Y _i^+}\to E_{\bar W _i}}$ is crepant i.e.
$K_{E_{\bar Y _i^+}/E_{\bar W _i}}=0$.
Therefore, if $E_{\bar Y _i}$ has
Du Val singularities, then $E_{\bar Y ^+_i}\in |-K_{\bar Y ^+_i}|$ is also Du Val.
Moreover, the rational map $\tilde E_{\bar Y _i}\dasharrow E_{\bar Y _i^+}$ is
a morphism. Since the flipping curve is irreducible,
$\rho(E_{\bar Y _i}/E_{\bar W _i})\leq 1$. Since $E_{\bar Y _i^+}\in |-K_{\bar Y _i^+}|$ and
$K_{\bar Y _i^+}$ is ample over $\bar W _i$, $E_{\bar Y _i^+}$ contains the flipped
curves so that $\rho(E_{\bar Y _i^+}/E_{\bar W _i})\geq 1$.
Therefore, we have $$\rho(\tilde E_{\bar Y ^+_i}/E_{\bar Y _i^+})\leq \rho(\tilde E_{\bar Y _i}/E_{\bar Y _i^+})\leq \rho(\tilde E_{\bar Y _i}/E_{\bar Y _i}).$$

The case of a flop is similar. In this case $K_{\bar Y _i}\equiv _{\bar W _i}0$
and $K_{\bar Y _i^+}\equiv _{\bar W _i}0$. If $E_{\bar Y _i}$ does not intersect the
flopping locus then $E_{\bar Y ^+_i}$ does not intersect the
flopping locus. If $E_{\bar Y _i}$ intersects the
flopping locus, then $E_{\bar Y ^+_i}$  intersects the flopped locus and hence
it contains the flopped locus. Therefore  $\rho(\tilde E_{\bar Y _i}/E_{\bar Y _i^+})\leq \rho(\tilde E_{\bar Y _i}/E_{\bar Y _i})$ and we conclude as in the previous case.
\end{proof}
It follows that if $G_{Y_i}\in |-K_{Y_i}|$ is general, then it has Du Val singularities and
$$\rho  (\tilde E_X/E_X)>\rho  (\tilde E_{Y_i}/E_{Y_i})\geq \rho  (\tilde G_{Y_i}/G_{Y_i})\qquad {\rm for\ all}\  i.$$
By induction, we obtain the required factorization of $Y\dasharrow Y'$ and of
$Y'\to X'$. $\qed$
\end{proof}
We will now prove \eqref{divSig} in full generality using $\dep
(X)$. We will need the following propositions that we believe are of
independent interest.
\begin{prop}\label{p-fl} Let $X\to W$ be a flipping contraction and let $X\dasharrow X'$ be the flip. Then $\dep (X) >  \dep (X')$.
\end{prop}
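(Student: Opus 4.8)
The plan is to feed the flip into the factorization of \eqref{decomp}, writing the germ over $W$ as $X \xleftarrow{f} Y \dashrightarrow Y' \xrightarrow{f'} X'$, where $f$ is a $w$-morphism, $Y \dashrightarrow Y'$ is a sequence of flips and flops over $W$, and $f'$ is a divisorial contraction onto $X' \cong X^+$. I would then prove $\dep(X) > \dep(X')$ by induction on $\dep(X)$, comparing the depths of the four spaces in the diagram one step at a time.

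First I would dispose of the flops. Since flopped terminal threefolds have analytically isomorphic singularities (Koll\'ar, \cite{Kol89}), each flop in the sequence $Y \dashrightarrow Y'$ leaves the total depth unchanged. For the flips $Y_i \dashrightarrow Y_{i+1}$ appearing in the sequence, I would invoke the inductive hypothesis: provided each $\dep(Y_i) < \dep(X)$, such a flip strictly decreases the depth while the flops preserve it, so that $\dep(Y) \ge \dep(Y')$, with strict inequality as soon as at least one genuine flip occurs. By \eqref{fl}, in the isolated case either $f'$ contracts its divisor to a curve or some $Y_i \dashrightarrow Y_{i+1}$ is indeed a flip, and the latter supplies the strict drop $\dep(Y) > \dep(Y')$.

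Next I would close the diagram at the two ends. On the right, $f'$ is a divisorial contraction to a point, so \eqref{p-dep} gives $\dep(X') \le \dep(Y') + 1$. On the left, the key point I would establish is the sharp equality $\dep(Y) = \dep(X) - 1$ for the $w$-morphism $f$. Granting this, the chain $\dep(X') \le \dep(Y') + 1 \le \dep(Y) = \dep(X) - 1 < \dep(X)$ finishes the argument; moreover the same inequality $\dep(Y) < \dep(X)$ is exactly what guarantees that every $Y_i$ in the sequence has depth below $\dep(X)$, so that the induction descends and the use of the inductive hypothesis above is legitimate.

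The hard part will be precisely this control of the depth across $f$. The definition of depth only yields $\dep(X) \le \dep(Y) + 1$ for a general $w$-morphism, and the reverse inequality amounts to $f$ being depth-realizing, which is not automatic for an arbitrary weighted blow-up. My plan is to extract it from the explicit description of the $w$-morphisms produced in \eqref{key}: in each case of Mori's classification these are concrete weighted blow-ups, so the singularities created on $Y$ are explicit cyclic quotient and $cA/r$ (or $cD$, $cE$) points whose depths are computed by \eqref{l-cyclic}, \eqref{p-cA} and \eqref{p-de}, and summing them should give $\dep(Y) = \dep(X) - 1$. The remaining subtlety is the alternative in \eqref{fl} in which $f'$ contracts its divisor to a curve and no flip occurs; there I would argue separately, using that such an $f'$ does not raise indices the way a contraction to a point does, so that the strict drop already present across $f$ is not cancelled on the way back down to $X'$.
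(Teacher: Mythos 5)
Your outline reproduces the paper's own argument almost exactly: the paper proves \eqref{divSig}, \eqref{p-fl} and \eqref{p-div} by a single simultaneous induction on $\dep(X)$, using the decomposition $X\stackrel{f}{\leftarrow} Y\dasharrow Y'\stackrel{f'}{\to} X'$ from \eqref{r-flip}, Koll\'ar's theorem for the flops, the inductive hypothesis for the intermediate flips, Remark \ref{fl} to split into the two cases, and \eqref{p-dep} on the right-hand end. You have also correctly isolated the one point the paper treats rather tersely, namely that the $w$-morphism produced by \eqref{key} must satisfy $\dep(Y)=\dep(X)-1$ and not merely $\dep(Y)\geq\dep(X)-1$; the paper disposes of this by asserting that $f$ may be taken to be the first blow-up of a $w$-resolution computing $\dep(X)$, which is exactly the verification you propose to carry out via \eqref{l-cyclic}, \eqref{p-cA} and \eqref{p-de}.

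The one genuine soft spot is your handling of the alternative in which $f'\colon Y'\to X'$ contracts its divisor to a curve and no flip occurs in $Y\dasharrow Y'$. There the inequality you need is $\dep(X')\leq\dep(Y')$, and the heuristic you offer (that a contraction to a curve ``does not raise indices the way a contraction to a point does'') is not an argument: a divisorial contraction to a curve can perfectly well have a target with worse depth than naive index considerations suggest, and no analogue of \eqref{p-dep} is available off the shelf. What is actually needed is precisely Proposition \ref{p-div}, $\dep(X)\geq\dep(W)$ for an extremal contraction to a curve, and the only way to get it is to fold it into the same induction on depth — i.e.\ to prove \eqref{p-fl} and \eqref{p-div} (and the factorization of \eqref{divSig}) together, each statement for smaller depth feeding the other. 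This is why the paper states and proves the three results simultaneously. Your framework accommodates this, but as written the plan for this case would not close up; you should replace the index heuristic by an explicit appeal to the divisorial-to-curve statement at the inductive level $\dep(Y')<\dep(X)$.
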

\begin{prop}\label{p-div} Let $X\to W$ be a extremal contraction to a curve. Then $\dep (X)\geq \dep (W)$.
\end{prop}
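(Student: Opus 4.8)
*Let $X\to W$ be an extremal contraction to a curve. Then $\dep(X)\geq \dep(W)$.*

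My approach would be to leverage the factorization machinery already assembled, namely Theorem \ref{decomp} together with Remark \ref{r-flip}, and to propagate the depth inequality through each elementary operation in the factorization. The plan is to reduce, by Remark \ref{r-div} and Theorem \ref{t-flip}, to the case where $g:X\supset C\to W\ni P$ is an extremal neighborhood, since a divisorial contraction to a non-irreducible curve factors through one. If $X$ is already Gorenstein, then by Theorem \ref{t-Cu} the contraction is the blow-up of an LCI curve in $W$ and $W$ is smooth near $C$, so $\dep(W)=0\leq \dep(X)$ and we are done. Thus I would assume $X$ is not Gorenstein and invoke Theorem \ref{decomp} to produce the diagram
$$X\stackrel{f}{\leftarrow} Y \dashrightarrow Y' \stackrel{f'}{\to} X'=W,$$
where $f$ is a $w$-morphism, $Y\dashrightarrow Y'$ is a composition of flips and flops over $W$, and $f':Y'\to W$ is a divisorial contraction to a point.

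The key step is to track how $\dep$ changes along this chain. First, since $f:Y\to X$ is a $w$-morphism, the definition of depth gives $\dep(X)\leq \dep(Y)+1$, i.e. $\dep(Y)\geq \dep(X)-1$. Next I would control the behaviour of depth under the intermediate flips and flops. For flips $Y_i\dasharrow Y_{i+1}$ this is exactly the content of Proposition \ref{p-fl}, which yields $\dep(Y_i)>\dep(Y_{i+1})$; for flops the singularities on the two sides are analytically isomorphic (Koll\'ar, \cite{Kol89}), so the baskets agree and $\dep$ is unchanged. Summing along the chain $Y=Y_0\dasharrow\cdots\dasharrow Y_m=Y'$ we obtain $\dep(Y)\geq \dep(Y')$, with strict inequality as soon as at least one flip occurs. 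Finally, since $f':Y'\to W$ is a divisorial contraction to a point, Proposition \ref{p-dep} applies and gives $\dep(Y')\geq \dep(W)-1$.

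Assembling these inequalities yields $\dep(X)\leq \dep(Y)+1$ and $\dep(Y)\geq \dep(Y')\geq \dep(W)-1$, which by themselves only produce $\dep(X)\geq \dep(W)-2$ --- too weak. The main obstacle, and where the real work lies, is recovering the missing two units. The resolution should come from Remark \ref{fl}: when $g$ is isolated (and here $g$ is a contraction to a curve, i.e. divisorial, so one reasons analogously) either $Y\dasharrow Y'$ contains a genuine flip --- contributing a strict drop $\dep(Y)>\dep(Y')$ that recovers one unit --- or else $f'$ is forced to be a divisorial contraction to a curve rather than to a point. I would argue that in the present situation $f'$ contracts $D_{Y'}$ to a curve (as established in step 5 of the proof of Theorem \ref{decomp}), so that along the final contraction one has a \emph{contraction to a curve} rather than to a point, for which Theorem \ref{t-Cu} forces the relevant variety to be Gorenstein over the generic point of the curve and the crude $-1$ of Proposition \ref{p-dep} can be sharpened. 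Concretely, I expect to show that the combined effect of the initial $w$-morphism ($+1$) together with at least one strict flip drop ($\geq +1$) exactly compensates the $-1$ losses at the two ends, yielding $\dep(X)\geq \dep(W)$.

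The delicate point to verify carefully is that the chain truly contains enough strictness: one must rule out the degenerate scenario in which $Y\dasharrow Y'$ is a pure sequence of flops with no flip, for otherwise no strict decrease is available. This is precisely the dichotomy exploited in Remark \ref{fl}, and I would adapt that argument, using that $f:Y\to X$ is a divisorial contraction while $f':Y'\to X'$ contracts the strict transform $D_{Y'}$ of the exceptional divisor to a curve, to force the presence of a flip whenever the Gorenstein improvement at the terminal step is insufficient. Making this bookkeeping precise, and checking that the two $-1$ contributions from Proposition \ref{p-dep} do not accumulate, is the crux of the proof.
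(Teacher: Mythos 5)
There is a genuine gap, and it comes from a misreading of the factorization in Theorem \ref{decomp}. In the divisorial case that diagram is \emph{not} $X\stackrel{f}{\leftarrow} Y\dashrightarrow Y'\stackrel{f'}{\to} X'=W$ with $f'$ a divisorial contraction to a point: it is $X\stackrel{f}{\leftarrow} Y\dashrightarrow Y'\stackrel{f'}{\to} X'\stackrel{g'}{\to} W$, where $f':Y'\to X'$ is a divisorial contraction \emph{to a curve} (it contracts $D_{Y'}$, by step 5 of the proof of \ref{decomp}) and only the last map $g':X'\to W$ is a divisorial contraction to a point. Because you collapse $X'$ and $W$, you apply Proposition \ref{p-dep} one step too early and land at $\dep(X)\geq \dep(W)-2$; you then try to recover the loss from strictness of flips via Remark \ref{fl}, but that remark concerns the isolated (flipping) case, there is no reason a flip must occur in the chain $Y\dashrightarrow Y'$ here, and you explicitly leave this ``crux'' unresolved. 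The paper needs no strictness at all in this case: it runs a simultaneous induction on $\dep(X)$ proving \eqref{divSig}, \eqref{p-fl} and \eqref{p-div} together, applies the inductive hypothesis of \ref{p-div} \emph{itself} to the contraction-to-a-curve $Y'\to X'$ (legitimate since $\dep(Y')<\dep(X)$) to get $\dep(Y')\geq\dep(X')$, applies \ref{p-dep} only to $X'\to W$ to get $\dep(X')\geq\dep(W)-1$, and concludes
$$\dep(X)=\dep(Y)+1\ \geq\ \dep(Y')+1\ \geq\ \dep(X')+1\ \geq\ \dep(W).$$
The single $+1$ from the $w$-morphism exactly cancels the single $-1$ from \ref{p-dep}.

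Two further points to tighten. First, the step $\dep(X)=\dep(Y)+1$ is not automatic from the definition (which only gives $\dep(X)\leq\dep(Y)+1$); you need, as the paper notes, that the $w$-morphism of Theorem \ref{key} can be taken to be the first blowup of a $w$-resolution computing $\dep(X)$. Second, your appeal to Proposition \ref{p-fl} for the intermediate flips should be flagged as part of the joint induction on depth, since \ref{p-fl} is not available as a standalone prior result at this point in the paper.
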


\begin{proof}[Proof of \eqref{divSig} and \eqref{p-fl}, \eqref{p-div}.]
We may work locally over the base $W$ and so we may assume that $g:X\to W$ is the germ of a flipping (resp. a divisorial to curve) contraction.
By \eqref{r-flip}, we may factor $X\dasharrow X'$ (resp. $X\to W$) by rational maps over $W$
$$X\stackrel{f}{\leftarrow} Y \dasharrow Y' \stackrel{f'}{\to} X'.$$
We proceed by induction on $\dep ( X)$. If $\dep (X)=0$, then $X$
is Gorenstein. There is no flipping contraction. Therefore $g$ is
a divisorial contraction to a curve, given by blowing up a LCI
curve in $W$. Hence the above two Propositions and Theorem
\ref{divSig} holds when $\dep(X)=0$.

We may therefore assume that $\dep ( X)>0$ and the statement holds
for all threefolds with $\dep < \dep(X)$.

By the proof of \eqref{key}, we may assume that $Y \to X$ is the first weighted blowup of a
$w$-resolution of
$X$, so that  $\dep(X)-1=\dep (Y)$. If $Y\dasharrow Y^+$ is a
flop, then by \cite{Kol89} $Y$ and $Y^+$ have isomorphic
singularities and so $\dep (Y)=\dep (Y^+)$.

Suppose first that $f: X \to W$ is a flipping contraction. By
induction, we then have $\dep (Y)\geq \ldots \geq \dep (Y_i) \geq
\ldots \geq \dep (Y')$. By induction on $\dep (\ldots )$ we obtain
the required factorization for each $Y_i\dasharrow Y_{i+1}$ and
hence for $X\dasharrow X'$.

Indeed, by Remark \ref{fl}, either $Y' \to X'$ is a divisorial
contraction to a curve, or at least one of $Y_i \to Y_{i+1}$ is a
flip.

If $Y' \to X'$ is a divisorial contraction to a curve, then by
induction, $\dep (Y')\geq \dep (X')$. Therefore, $$\dep(X)
=\dep(Y)+1 > \dep(Y) \ge \dep(Y')  \geq \dep(X').$$ If $Y' \to X'$
is a divisorial contraction to a point, then by Remark \ref{fl},
we have that at least one of $Y_i \to Y_{i+1}$ is a flip.
Therefore, $\dep(Y_i) > \dep(Y_{i+1})$ by induction and hence
$\dep(Y) > \dep(Y')$. Together with Proposition \ref{p-dep} that
$\dep(Y')>\dep(X)-1$, it follows that
$$\dep(X)
=\dep(Y)+1 > \dep(Y')+1  \geq \dep(X').$$

Suppose next that $f: X \to W$ is a divisorial contraction to a
curve. Again, we have $\dep (Y)\geq \ldots \geq \dep (Y_i) \geq
\ldots \geq \dep (Y')$. Since $Y' \to X'$ is a divisorial
contraction to a curve and $\dep(Y') < \dep(X)$, we have $\dep(Y')
\ge \dep(X')$. Finally, as $X' \to W$ is a divisorial contraction to
a point, we have that $\dep(X') \ge \dep(W)-1$. We thus conclude
that $$ \dep(X)=\dep(Y)+1 \ge \dep(X')+1 \ge \dep(W).$$ Moreover,
by induction on $\dep (\ldots )$ we obtain the required factorization for
each $Y_i\dasharrow Y_{i+1}$ and hence for $X \dasharrow W$.
\end{proof}



\begin{thebibliography}{99}
\bibitem{BCHM} C. Birkar, P. Cascini, C. Hacon, J. M$^{\rm c}$Kernan, {\em Existence of minimal models for varieties of log general type.}, J. Amer. Math. Soc., to appear. arXiv:math/0610203v2 [math.AG]

\bibitem{Br} G. Brown, {\em Pluricanonical cohomology across flips}.  Bull.
London Math. Soc.  {\bf 31}  (1999),  no. 5, 513--522.

\bibitem{CC} J. A. Chen and M. Chen, {\em Explicit birational
geometry of threefolds of general type, I}. Ann. Sci. Ecole Norm.
Sup., to appear. arXiv: 0810.5041.





\bibitem{Co00} A. Corti, {\em Singularities of linear systems and $3$-fold birational geometry.}  Explicit birational geometry of 3-folds,  259--312,
London Math. Soc. Lecture Note Ser., 281, Cambridge Univ. Press, Cambridge, 2000.

\bibitem{Cu} S. D. Cutkosky, {\em Elementary contractions of Gorenstein
threefolds}, Math. Ann. {\bf 280}  (1988),  no. 3, 521--525.

\bibitem{H-M} C. D. Hacon and J. M$^{\text{\rm c}}$Kernan, {\em
Boundedness of pluricanonical maps of varieties of general
type}, Invent. Math. {\bf 166} (2006), 1-25.

\bibitem{HM} C. D. Hacon and J. M$^{\text{\rm c}}$Kernan, {\em Existence of minimal models for varieties of log general type II.}   arXiv:0808.1929v1 [math.AG]

\bibitem{HaI} T. Hayakawa, {\em Blowing ups of $3$-dimensional terminal
singularities},  Publ. Res. Inst. Math. Sci.  {\bf 35}  (1999),
no. 3, 515--570.

\bibitem{HaII} T. Hayakawa, {\em  Blowing ups of 3-dimensional terminal
singularities. II},  Publ. Res. Inst. Math. Sci.  {\bf 36}
(2000), no. 3, 423--456.
\bibitem{HaIII}  T. Hayakawa, {\em Gorenstein resolutions of 3-dimensional terminal singularities},
 Nagoya Math. J. 178 (2005), 63--115.

\bibitem{Ha03} T. Hayakawa, {\em Flips in dimension three via crepant descent method.}  Proc. Japan Acad. Ser. A Math. Sci.  79  (2003),  no. 2, 46--51.
\bibitem{Ha05} T. Hayakawa, {\em Divisorial contractions to $3$-dimensional terminal singularities with discrepancy one.} J. Math. Soc. Japan, {\bf 57} 2005.

\bibitem{Kaw94} Y. Kawamata, {\em Semistable minimal models of threefolds in positive or mixed characteristic.}
J. Algebraic Geom. 3 (1994), no. 3, 463--491.

\bibitem{Kaw96} Y. Kawamata, {\em Divisorial contractions to $3$-dimensional terminal quotient singularities. }   Higher-dimensional complex varieties (Trento, 1994),  241--246, de Gruyter, Berlin, 1996.

\bibitem{Ka} Y. Kawamata, {\em Crepant blowing-up of $3$-dimensional canonical singularities and its application to degenerations of surfaces.}  Ann. of Math. (2)  127  (1988),  no. 1, 93--163.

\bibitem{Kaw08} Y. Kawamata, {\em Flops connect minimal models.}  (English summary)
Publ. Res. Inst. Math. Sci. 44 (2008), no. 2, 419--423.

\bibitem{KMM} Y. Kawamata, K. Matsuda, K. Matsuki, {\em Introduction
to the minimal model problem},  Algebraic geometry, Sendai, 1985,
283--360, Adv. Stud. Pure Math.,{\bf 10}, North-Holland,
Amsterdam, 1987.

\bibitem{Kk01} M. Kawakita, {\em Divisorial contractions in dimension three
which contract divisors to smooth points},  Invent. Math.  {\bf
145} (2001),  no. 1, 105--119.

\bibitem{Kk02} M. Kawakita, {\em Divisorial contractions in dimension three which contract divisors to compound $A\sb 1$ points.}  Compositio Math.  133  (2002),  no. 1, 95--116.
\bibitem{Kk03} M. Kawakita, {\em General elephants of three-fold divisorial contractions}.  J. Amer. Math. Soc.  16  (2003),  no. 2, 331--362.
\bibitem{Kk05} M. Kawakita, {\em Three-fold divisorial contractions to
singularities of higher indices},  Duke Math. J.  {\bf 130}
(2005), no. 1, 57--126.


\bibitem{Keel99} S. Keel, {\em Basepoint freeness for nef and big line bundles in positive characteristic.} Ann. of Math. (2) 149 (1999), no. 1, 253--286.


\bibitem{Kol89} J. Koll\'ar, {\em Flops.}  Nagoya Math. J.  113  (1989), 15--36.
\bibitem{Kol91} J. Koll\'ar, {\em Extremal rays on smooth threefolds.}
Ann. Sci. \'Ecole Norm. Sup. (4) 24 (1991), no. 3, 339--361.
\bibitem{Kol95} J. Koll\'ar, {\em Singularities of pairs. } Algebraic geometry---Santa Cruz 1995,  221--287, Proc. Sympos. Pure Math., 62, Part 1, Amer. Math. Soc., Providence, RI, 1997.
\bibitem{Kol} J. Koll\'ar, {\em Higher direct images of dualizing
sheaves I}, Ann. Math. {\bf 123}(1986), 11-42;\ \ II, ibid.
{\bf 124}(1986), 171-202.
\bibitem{Ketal} J. Koll\'ar et. al, {\em Flips and abundance for algebraic threefolds.}
Papers from the Second Summer Seminar on Algebraic Geometry held at the University of Utah, Salt Lake City, Utah, August 1991. Asterisque No. 211 (1992). Societe Mathematique de France, Paris, 1992. pp. 1--258.
\bibitem{CKM} H. Clemens, J. Koll\'ar and S. Mori, {\em Higher-dimensional complex geometry.}  Ast\'erisque  No. 166  (1988), 144 pp. (1989).

\bibitem{K-M} J. Koll\'ar and S. Mori, {\em Birational geometry of algebraic
varieties,} 1998, Cambridge Univ. Press.

\bibitem{KM92} J. Koll\'ar and  S. Mori, {\em Classification of
three-dimensional flips},  J. Amer. Math. Soc.  {\bf 5}  (1992),
no. 3, 533--703.
\bibitem{Luo98} T. Luo, {\em Divisorial extremal contractions of threefolds: divisor to point. }
Amer. J. Math. 120 (1998), no. 2, 441--451.
\bibitem{Mo82} S. Mori, {\em Threefolds whose canonical bundles
are not numerically effective}, Ann. Math. {\bf 116} (1982),
133--176.

\bibitem{Mo85} S. Mori, {\em On $3$-dimensional terminal
singularities}, Nagoya Math. J.  {\bf 98}  (1985), 43--66.

\bibitem{Mo88} S. Mori, {\em Flip theorem and the existence of minimal
models for $3$-folds},  J. Amer. Math. Soc.  {\bf 1}  (1988),  no.
1, 117--253.

\bibitem{Mo07} S. Mori, {\em Errata to: ``Classification of three-dimensional flips'' [J. Amer. Math. Soc. 5 (1992), no. 3, 533--703; MR1149195] by J. Koll\'r and Mori. } J. Amer. Math. Soc.  20  (2007),  no. 1, 269--271

\bibitem{Reid83} M. Reid, {\em Minimal models of
canonical 3-folds}, Adv. Stud. Pure Math. {\bf 1}(1983),
131-180.
\bibitem{Tz1} N. Tziolas, {\em Terminal 3-fold divisorial contractions of a surface to a curve. I.}  Compositio Math.  139  (2003),  no. 3, 239--261.
\bibitem{Tz2} N. Tziolas, {\em Families of $D$-minimal models and applications to 3-fold divisorial contractions.}  Proc. London Math. Soc. (3)  90  (2005),  no. 2, 345--370.
\bibitem{Tz3} N. Tziolas, {\em Three dimensional divisorial extremal neighborhoods.}  Math. Ann.  333  (2005),  no. 2, 315--354.
\bibitem{Tz4} N. Tziolas, {\em $\mathbb Q$-Gorenstein deformations of nonnormal surfaces.}  Amer. J. Math.  131  (2009),  no. 1, 171--193.
 \bibitem{Tz5} N. Tziolas, {\em $3$-fold divisorial extremal neighborhoods over cE7 and cE6 Compound DuVal singularities}. arXiv:math/0703855

\bibitem{YPG} M. Reid,  {\em Young person's guide to canonical
singularities}, Proc. Symposia in pure Math. {\bf
46}(1987), 345-414.

\end{thebibliography}
\end{document}